% ----------------------------------------------------------------
% AMS-LaTeX Paper ************************************************
% **** -----------------------------------------------------------
\documentclass[10pt,reqno]{amsart}
\usepackage{bbm}
\usepackage{tabu}
\usepackage{amsmath}
\usepackage{mathrsfs}
\usepackage{bm}
\usepackage{amsfonts} %%% i.e. use 12pt type
\textwidth=13.5cm %%% in the preamble; this will require
%%% adjusting the layouot of some wide formulas
\baselineskip=17pt %%% after \begin{document}
\usepackage{graphicx,latexsym,bm,amsmath,amssymb,verbatim,multicol,lscape}
\usepackage{enumerate}
% ----------------------------------------------------------------
\vfuzz2pt % Don't report over-full v-boxes if over-edge is small
\hfuzz2pt % Don't report over-full h-boxes if over-edge is small

% THEOREMS -------------------------------------------------------
\newtheorem{thm}{Theorem} [section]

\newtheorem{cor}[thm]{Corollary}
\newtheorem{lem}[thm]{Lemma}
\newtheorem{prop}[thm]{Proposition}

\theoremstyle{definition}
\newtheorem{defn}[thm]{Definition}
\theoremstyle{remark}
\newtheorem{rem}[thm]{Remark}
\newtheorem{exa}[thm]{Example}

\numberwithin{equation}{section}
% MATH -----------------------------------------------------------

%\renewcommand{\baselinestretch}{3}

\newcommand{\fq}{{\mathbb F}_{q}}
\newcommand{\fp}{{\mathbb F}_{p}}

\newcommand{\fth}{{\mathbb F}_{3}}

\newcommand{\Tr}{\mbox{Tr}}

 %Nate Black added this
 % Nate Black added this

\newcommand{\rmv}[1]{}

\def\<{\left\langle}
\def\>{\right\rangle}

% ----------------------------------------------------------------
\begin{document}

\title[On binomial Weil sums and an application]
{On binomial Weil sums and an application}%
\author{Kaimin Cheng}
%    Address of record for the research reported here
\address{School of Mathematics and Information, China West Normal University,
Nanchong, 637002, P. R. China}
\email{ckm20@126.com}
\author{Shuhong Gao}
%    Address of record for the research reported here
\address{School of Mathematical and Statistic Sciences, Clemson University, Clemson, SC 29631, USA}
\email{sgao@clemson.edu}
\thanks{This work was supported partially by the NSF of China (No. 12226335).}

%\subjclass{Primary 11T22,11R18}%
\keywords{Exponential sums, Explicit evaluations, Linear codes, Weight distribution, Optimal codes.}
\subjclass[2000]{Primary 11T24, 94B05}
\date{\today}%zxzsasszxsmkht5rrfc  vasv
%\dedicatory{}%
%\commby{}%
% ----------------------------------------------------------------
\begin{abstract}
Let $p$ be a prime, and $N$ be a positive integer not divisible by $p$. Denote by ${\rm ord}_N(p)$ the multiplicative order of $p$ modulo $N$. Let $\mathbb{F}_q$ represent the finite field of order $q=p^{{\rm ord}_N(p)}$. For $a, b\in\mathbb{F}_q$, we define a binomial exponential sum by
$$S_N(a,b):=\sum_{x\in\mathbb{F}_q\setminus\{0\}}\chi(ax^{\frac{q-1}{N}}+bx),$$
where $\chi$ is the canonical additive character of $\mathbb{F}_q$. In this paper, we provide an explicit evaluation of  $S_{N}(a,b)$ for any odd prime $p$ and any $N$ satisfying ${\rm ord}_{N}(p)=\phi(N)$. Our elementary and direct approach allows for the construction of a class of ternary linear codes, with their exact weight distribution determined. Furthermore, we prove that the dual codes achieve optimality with respect to the sphere packing bound, thereby generalizing previous results from even to odd characteristic fields.\end{abstract}

\maketitle

\section{Introduction}
Let $p$ be a prime number and $q$ be a power of $p$. Let $\mathbb{F}_q$ be the finite field with $q$ elements. We denote by ${\rm Tr}$ the absolute \textit{trace} function from $\mathbb{F}_q$ onto $\mathbb{F}_p$. A \textit{Weil sum} is an exponential sum of the form
$$\sum_{x\in \mathbb{F}_q}\chi(f(x)),$$
where $\chi$ is a non-trivial additive character of $\mathbb{F}_q$ and $f(x)\in\mathbb{F}_q[x]$. Explicit evaluation of Weil sums with specific forms has been investigated extensively (see, for example, \cite{[Car1],[Car2],[Cou],[FH],[Moi],[Moi2],[Wan1],[Wan2]}), but it is still quite difficult in general. Results that estimate the absolute value of the sum are more common and have appeared regularly for many years. A comprehensive survey covering many aspects of this topic including theories and applications was given in the books \cite{[LN]} and \cite{[BGMV]}.

The first goal of this paper is to obtain the explicit values of a binomial Weil sum. The canonical additive character of $\mathbb{F}_q$, denoted by $\chi_1$, is defined by
$$\chi_1(x)=e^{\frac{2\pi i{\rm Tr}(x)}{p}}$$
for all $x\in\mathbb{F}_q$. Due to the properties of the trace function, $\chi_1(x+y)=\chi_1(x)\chi_1(y)$ and $\chi_1(x^p)=\chi_1(x)$ for all $x,y\in\mathbb{F}_q$, any additive character $\chi_{\gamma}$ of $\mathbb{F}_q$ can be obtained from $\chi_1$: for any $\gamma\in\mathbb{F}_q$, $\chi_{\gamma}(x)=\chi_1(\gamma x)$ for all $x\in\mathbb{F}_q$. So one only needs to explicitly evaluate the Weil sums with $\chi=\chi_1$ as the Weil sums for any non-trivial additive character can be derived simply by manipulating the results obtained using this identity. Let $N$ be a positive integer not divisible by $p$. Let $d$ be the multiplicative order of $p$ modulo $N$, denoted by $d={\rm ord}_N(p)$; that is, $d$ is the least positive integer such that $p^d\equiv 1\pmod{N}$. Let $q=p^d$. For $a, b\in\mathbb{F}_q$, define a Weil sum of a particular shape by
$$S_N(a,b):=\sum_{x\in\mathbb{F}_q^*}\chi(ax^{\frac{q-1}{N}}+bx)$$
with $\chi=\chi_1$. Providing an explicit evaluation of $S_N(a,b)$ is of significant interest. In 2009, Moisio \cite{[Moi2]} gave the explicit evaluation of $S_{N}(a,b)$ for $p=2$ and $N=\wp^m$, where $\wp$ is an odd prime and $m$ is a positive integer such that $2$ is a primitive root modulo $\wp^m$. In 2019, under certain conditions, Wu, Yue and Li \cite{[WYL]} generalized the evaluation result of Moisio to odd characteristics. In fact, they presented an evaluation of $S_{\wp^m}(a,b)$ for $p\ge 3$ under a condition that the least subfield $\mathbb{F}_{p^t}$ of $\mathbb{F}_{q}$ subject to certain restrictions is $\mathbb{F}_{p}$ or $\mathbb{F}_{p^2}$, where $\wp$ is an odd prime and $m$ is a positive integer such that $p$ is a primitive root modulo $\wp^m$.  In this paper, inspired by the work \cite{[Moi2]}, we obtain an explicit evaluation of $S_N(a,b)$ for any positive integer $N$ and any odd prime $p$, assuming that  $p$ is a primitive root modulo $N$. More precisely, we shall compute $S_N(a,b)$ for $N=2, 4, \wp^m$ and $2\wp^m$ respectively, which are the all possible values of $N$ since there exists a primitive root modulo $N$ if and only if $N=2, 4, \wp^m$ or $2\wp^m$, where $\wp$ is an odd prime and $m$ is a positive integer. Our results are not based on any conditions, and our method is elementary and straightforward, differing significantly from that used in \cite{[WYL]}.

The second objective of the paper is to construct a class of linear codes. Let $n,k$ and $d$ be positive integers. An $[n,k,d]$ $p$-ary linear code is a $k$-dimensional subspace of $\mathbb{F}_p$ with minimum (Hamming) distance $d$. For a positive integer $i$, denote by $A_i$ the number of codewords with Hamming weight $i$ in a code $\mathcal{C}$ of length $n$.
Let
$$1+A_1z+A_2z^2+\cdots+A_nz^n$$
be the \textit{weight enumerate} of $\mathcal{C}$.
The sequence $(A_1,A_2,\ldots,A_n)$ is referred to as the\textit{ weight distribution} of
$\mathcal{C}$. If the number of nonzero $A_i$ in the sequence $(A_1,A_2,\ldots,A_n)$ is $N$, then the code $\mathcal{C}$ is called an $N$-weight code. The weight distribution directly determines the minimum distance of the code and the error correcting capability. It also contains key information on computation of the probability of error detection and correction with respect to some error detection and correction algorithms \cite{[Klo]}. As a result, studying the weight distribution of a linear code is an important research topic in coding theory. For a set $D=\{d_1,d_2,\ldots,d_n\}\subseteq\mathbb{F}_q^*$, define a set by
\begin{align}\label{c1-1}
\mathcal{C}_D:=\{({\rm Tr}(d_1x),{\rm Tr}(d_2x),\ldots,{\rm Tr}(d_nx)):\ x\in\mathbb{F}_q\}.
\end{align}
Clearly, $\mathcal{C}_D$ is a linear codes of length over $\mathbb{F}_p$, and we call $D$ the \textit{defining set} of the code $\mathcal{C}_D$. This defining-set construction is fundamental since every linear code over $\mathbb{F}_p$ can be expressed as $\mathcal{C}_D$ for some defining set $D$ (possibly multiset) \cite{[Din]}. Many classes of codes with few weights were produced by properly selecting the defining set $D\subseteq\mathbb{F}_q$ (see, for example, \cite{[Din1],[Din2],[TXF],[WDX]}). In the paper, a class of two-weight codes is constructed motivated by the work of Wang, Ding and Xue \cite{[WDX]}, and the weight distribution is determined. The dual codes is also studied, and it is proved to be optimal codes. The linear codes with two weights presented in this paper have applications in secret sharing \cite{[ADHK]}, authentication codes \cite{[DW]}, combinatorial designs \cite{[CK]}, etc.

Throughout this paper, we always assume that $p$ is an odd prime, $N$ is a positive integer not divisible by $p$ such that ${\rm ord}_{N}(p)=\phi(N)$, and $q=p^{\phi(N)}$. The paper is organized as follows. First in Section 2, we evaluate $S_N(a,0)$. Subsequently in Section 3, the explicit evaluation of the exponential sum $S_N(a,b)$ with $ab\ne 0$ is obtained. In Section 4, a class of ternary codes is constructed, and its weight distribution is determined. Finally in Section 5, we give the parameters of the dual codes of the codes obtained in Section 4, and show that the dual codes are optimal with respect to the sphere packing bound.

\section{Evaluation of $S_N(a,0)$}
It is well known that a primitive root modulo $N$ exists if and only if $N=2, 4, \wp^m$ or $2\wp^m$, where $\wp$ is an odd prime and $m$ is a positive integer. Let $p$ be an odd prime and $N$ be a positive integer not divisible by $p$ such that $p$ is a primitive root modulo $N$, i.e., ${\rm ord}_{N}(p)=\phi(N)$. Then $N$ must be $2, 4, \wp^m$ or $2\wp^m$ for some odd prime $\wp$ and positive integer $m$. Let $q=p^{\phi(N)}$ and $g$ be a primitive element of $\fq$. First, if $N=2$, then $q=p$. It implies that
\begin{align*}
S_2(a,b)=\sum_{x\in\fp^*}\chi(ax^{\frac{p-1}{2}}+bx)
=\sum_{i=0}^{p-2}\chi(a(g^{\frac{p-1}{2}})^{i}+bg^i)
=\sum_{i=0}^{p-2}\zeta_p^{a(-1)^i+bg^i},
\end{align*}
which is computable once $p$ is given, where $\zeta_p$ is a primitive $p$-th root of unity.
\begin{exa}\label{exa2.1}
Let $p=q=5$. We know that $2$ is a primitive element of $\mathbb{F}_5$. Then
$$S_2(a,b)
=\sum_{i=0}^{p-2}\zeta_5^{a(-1)^i+b\times2^i}=\zeta_5^{a+b}+\zeta_5^{-a+2b}+
\zeta_5^{a-b}+\zeta_5^{-a-2b}$$
for any $a,b\in\mathbb{F}_5$.
\end{exa}
Next, we provide the explicit evaluation of $S_N(a,0)$ for $N=4, \wp^m$ and $2\wp^m$ respectively. Let $\xi:=g^{\frac{q-1}{N}}$, a primitive $N$-th root of unity. For $a\in\mathbb{F}_q$, define a sum
\begin{align}\label{c2-0}
S_N(a)=\sum_{i=0}^{N-1}\chi(a\xi^i).
\end{align}
One then readily finds that
\begin{align}\label{c2-2-2}
S_N(a,0)=\frac{q-1}{N}S_N(a).
\end{align}
In fact, by definition, we have
\begin{align*}
S_N(a,0)=\sum_{x\in\mathbb{F}_q^*}\chi(ax^{\frac{q-1}{N}})
=\sum_{i=0}^{q-2}\chi(a(g^i)^{\frac{q-1}{N}})
=\sum_{i=0}^{q-2}\chi(a\xi^{i})
=\frac{q-1}{N}\sum_{i=0}^{N-1}\chi(a\xi^i)
=\frac{q-1}{N}S_N(a).
\end{align*}
Here, the second to last equality holds since $\xi$ is a primitive $N$-th root of unity. So in order to evaluate $S_N(a,0)$, we only need to compute $S_N(a)$.

Let $Q_{N}(x)$ be the $N$-th cyclotomic polynomial over $\mathbb{F}_p$. Clearly, $\xi$ is a root of $Q_{N}(x)$. Note that $p$ is a primitive root modulo $N$. It then follows from a well-known result (see \cite[Theorem 2.47(ii)]{[LN]}) that $Q_{N}(x)$ factors into exactly one irreducible polynomial over $\mathbb{F}_p$. This implies that $Q_{N}(x)$ is irreducible over $\mathbb{F}_p$, and $\xi$ is of degree $\phi(N)$ over $\mathbb{F}_p$. Note that $q=p^{\phi(N)}$. It implies that
$$\mathbb{F}_q=\mathbb{F}_p(\xi).$$
Hence, $\{\xi,\xi^2,\ldots,\xi^{\phi(N)}\}$ can be a basis of $\mathbb{F}_q$ over $\mathbb{F}_p$. Then, for each $a\in\mathbb{F}_q$ there exists a unique vector $(a_1,a_2,\ldots,a_{\phi(N)})\in\mathbb{F}_p^{\phi(N)}$ such that
$$a=\sum_{i=1}^{\phi(N)}a_i\xi^i.$$
So we can identify $a$ with the vector $(a_1,a_2,\ldots,a_{\phi(N)})$.

The main results of this section are given as follows.
\begin{thm}\label{thm2.1.1}
Assume that $p$ is the primitive root modulo $4$ and $q=p^2$. Let $\xi=g^{\frac{q-1}{4}}$. For any $a\in\fq$, if write $a=a_1\xi+a_2\xi^2$ with $a_1,a_2\in\fp$, then
$$S_4(a)=\zeta_p^{2a_1}+\zeta_p^{-2a_1}+\zeta_p^{2a_2}+\zeta_p^{-2a_2}.$$
\end{thm}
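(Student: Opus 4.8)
The plan is to reduce the entire computation to evaluating the absolute trace ${\rm Tr}\colon\fq\to\fp$ on the four elements $a\xi^i$, since by definition $\chi(z)=\zeta_p^{{\rm Tr}(z)}$. The starting point is to nail down the arithmetic of $\xi$: because $p$ is a primitive root modulo $4$ we have ${\rm ord}_4(p)=\phi(4)=2$, forcing $p\equiv 3\pmod 4$, and since $\xi$ is a primitive $4$-th root of unity this yields $\xi^2=-1$. Consequently the four powers in $S_4(a)=\sum_{i=0}^{3}\chi(a\xi^i)$ collapse to $1,\xi,-1,-\xi$, so that $S_4(a)=\chi(a)+\chi(a\xi)+\chi(-a)+\chi(-a\xi)$.

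The key auxiliary facts I would establish next are two trace identities. First, for any $c\in\fp$ the trace is ${\rm Tr}(c)=2c$, because $[\fq:\fp]=2$. Second, ${\rm Tr}(\xi)=0$: from $p\equiv 3\pmod 4$ and $\xi^4=1$ we get $\xi^p=\xi^3=-\xi$, so the Frobenius sends $\xi\mapsto-\xi$ and hence ${\rm Tr}(\xi)=\xi+\xi^p=0$. These are the only nontrivial inputs; everything else is formal.

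With these in hand, the final step is a direct substitution. Writing $a=a_1\xi+a_2\xi^2=-a_2+a_1\xi$ gives ${\rm Tr}(a)=-2a_2$, and multiplying by $\xi$ gives $a\xi=a_1\xi^2+a_2\xi^3=-a_1-a_2\xi$, whence ${\rm Tr}(a\xi)=-2a_1$. Therefore $\chi(a)=\zeta_p^{-2a_2}$, $\chi(-a)=\zeta_p^{2a_2}$, $\chi(a\xi)=\zeta_p^{-2a_1}$ and $\chi(-a\xi)=\zeta_p^{2a_1}$; summing the four terms produces exactly $\zeta_p^{2a_1}+\zeta_p^{-2a_1}+\zeta_p^{2a_2}+\zeta_p^{-2a_2}$, as claimed.

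I expect no serious obstacle here: the single point requiring care is the computation ${\rm Tr}(\xi)=0$ via the Frobenius action $\xi^p=-\xi$, which rests entirely on the congruence $p\equiv 3\pmod 4$ coming from the primitive-root hypothesis. Once that identity and the trivial identity ${\rm Tr}(c)=2c$ for $c\in\fp$ are fixed, the remainder is mechanical linear algebra in the basis $\{1,\xi\}$.
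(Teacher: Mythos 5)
Your proof is correct and follows essentially the same route as the paper: both reduce $S_4(a)$ to traces of $a\xi^i$ via linearity, using $\xi^2=-1$ together with $\mathrm{Tr}(\xi)=0$ and $\mathrm{Tr}(c)=2c$ for $c\in\mathbb{F}_p$. The only cosmetic difference is that you obtain $\mathrm{Tr}(\xi)=0$ from the Frobenius action $\xi^p=\xi^3=-\xi$ (using $p\equiv 3\pmod 4$), whereas the paper reads it off from the minimal polynomial $x^2+1$ of $\xi$ over $\mathbb{F}_p$ --- two phrasings of the same fact.
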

\begin{proof}
Note that $x^2+1$ is the minimal polynomial of $\xi$ over $\fp$. It follows that ${\rm Tr}(\xi)=0$. So, by the linearity of trace functions we have that
$$S_4(a)=\sum_{i=0}^3\chi(a\xi^i)=\sum_{i=0}^3\zeta_p^{{\rm Tr}(a_1\xi^{i+1}+a_2\xi^{i+2})}
=\zeta_p^{2a_1}+\zeta_p^{-2a_1}+\zeta_p^{2a_2}+\zeta_p^{-2a_2},$$
as desired.
\end{proof}
\begin{thm}\label{thm2.1}
Assume that $p$ is the primitive root modulo $\wp^m$ with $\wp$ an odd prime and $m$ a positive integer, and $q=p^{\phi(\wp^m)}$. Let $\xi=g^{\frac{q-1}{\wp^m}}$. For any $a\in\mathbb{F}_q$, if write
$a=a_1\xi+a_2\xi^2+\cdots+a_{\phi(\wp^m)}\xi^{\phi(\wp^m)}$
with each $a_s\in\mathbb{F}_p$ and let
\begin{align}\label{c2-1}
e_i(a)=\prod_{k=1}^{\wp-1}\zeta_p^{-\wp^{m-1}a_{k\wp^{m-1}-i}},\ f_i(a)=\sum_{k=1}^{\wp-1}\zeta_p^{\wp^{m}a_{k\wp^{m-1}-i}}
\end{align}
for any integer $i$ with $0\le i\le \wp^{m-1}-1$,
then
$$S_{\wp^m}(a)=\sum_{i=0}^{\wp^{m-1}-1}e_i(a)(f_i(a)+1).$$
\end{thm}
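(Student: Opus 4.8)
The plan is to compute $S_{\wp^m}(a)=\sum_{j=0}^{\wp^m-1}\chi(a\xi^j)=\sum_{j=0}^{\wp^m-1}\zeta_p^{{\rm Tr}(a\xi^j)}$ head-on, expanding each exponent by linearity of the trace as ${\rm Tr}(a\xi^j)=\sum_{s=1}^{\phi(\wp^m)}a_s\,{\rm Tr}(\xi^{s+j})$, using the basis $\{\xi,\xi^2,\ldots,\xi^{\phi(\wp^m)}\}$ already fixed in the excerpt. Everything hinges on first pinning down the values ${\rm Tr}(\xi^\ell)$ as a function of $\ell\bmod \wp^m$.

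The first and central step is the trace lemma. Since ${\rm Tr}(\xi^\ell)=\sum_{t=0}^{\phi(\wp^m)-1}\xi^{\ell p^t}$ and $p$ is a primitive root modulo $\wp^m$ (hence modulo every $\wp^{e}$ with $1\le e\le m$), the Frobenius orbit of $\xi^{\ell}$ is exactly the set of all primitive $n$-th roots of unity, where $n$ is the order of $\xi^\ell$, and it is visited with multiplicity $\phi(\wp^m)/\phi(n)$. Because the sum of all primitive $n$-th roots of unity equals $\mu(n)$ (an integer identity coming from the cyclotomic polynomial, valid in $\fp$ as $p\nmid\wp$), and $\mu(\wp^{e})=0$ for $e\ge 2$, I expect to obtain
$$
{\rm Tr}(\xi^{\ell})=
\begin{cases}
(\wp-1)\wp^{m-1}, & \wp^m\mid \ell,\\[2pt]
-\wp^{m-1}, & \wp^{m-1}\mid\ell \ \text{but}\ \wp^m\nmid \ell,\\[2pt]
0,& \wp^{m-1}\nmid\ell.
\end{cases}
$$
The vanishing in the last case is precisely what will make the final sum collapse to a sum over $i\le\wp^{m-1}-1$.

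With the lemma in hand, I would fix the grouping $j=i+r\wp^{m-1}$ with $i\in\{0,\ldots,\wp^{m-1}-1\}$ and $r\in\{0,\ldots,\wp-1\}$, a bijection with $\{0,\ldots,\wp^m-1\}$. For such $j$ the only surviving indices are those $s\equiv -i\pmod{\wp^{m-1}}$, namely $s=k\wp^{m-1}-i$ for $k=1,\ldots,\wp-1$ (one checks these range exactly over $\{1,\ldots,\phi(\wp^m)\}$ as $(k,i)$ vary), and then $s+j=(k+r)\wp^{m-1}$. If $r=0$, then $k+r=k\not\equiv0\pmod\wp$ for all $k$, so every surviving trace is $-\wp^{m-1}$ and $\zeta_p^{{\rm Tr}(a\xi^i)}=e_i(a)$. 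If $1\le r\le\wp-1$, then $k=\wp-r$ is the unique index with $k+r=\wp\equiv0\pmod\wp$, contributing trace $(\wp-1)\wp^{m-1}$ while the rest contribute $-\wp^{m-1}$; collecting terms yields ${\rm Tr}(a\xi^j)=-\wp^{m-1}\sum_{k=1}^{\wp-1}a_{k\wp^{m-1}-i}+\wp^m a_{(\wp-r)\wp^{m-1}-i}$, i.e. $\zeta_p^{{\rm Tr}(a\xi^j)}=e_i(a)\,\zeta_p^{\wp^m a_{(\wp-r)\wp^{m-1}-i}}$.

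Finally I would sum within each $i$-block: the term $r=0$ gives $e_i(a)$, and as $r$ runs over $\{1,\ldots,\wp-1\}$ the substitution $k'=\wp-r$ runs over $\{1,\ldots,\wp-1\}$, so $\sum_{r=1}^{\wp-1}\zeta_p^{{\rm Tr}(a\xi^j)}=e_i(a)f_i(a)$. Hence each block contributes $e_i(a)(f_i(a)+1)$, and summing over $i$ gives the stated formula. The main obstacle is the trace lemma: the primitivity of $p$ modulo $\wp^m$ must be used to identify both the Frobenius orbit and its exact multiplicity $\phi(\wp^m)/\phi(n)$, and the vanishing for $\wp^{m-1}\nmid\ell$ (via $\mu(\wp^e)=0$) is what drives the collapse. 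After that the argument is careful index bookkeeping, the delicate points being the verification that $s=k\wp^{m-1}-i$ exhausts $\{1,\ldots,\phi(\wp^m)\}$ and that $k=\wp-r$ is the unique index landing on a multiple of $\wp^m$.
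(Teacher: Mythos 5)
Your proposal is correct, and its combinatorial core coincides with the paper's proof of Theorem \ref{thm2.1} step for step: the decomposition $j=t\wp^{m-1}+i$, the identification of the surviving basis indices $s=k\wp^{m-1}-i$ with $1\le k\le \wp-1$, the observation that $k=\wp-t$ is the unique index lifted to a multiple of $\wp^m$ (and only when $t\ne 0$), and the collapse of each $i$-block to $e_i(a)(f_i(a)+1)$ are exactly the paper's \eqref{c2-2}--\eqref{c2-7}; your bookkeeping (including the check that $k$ really exhausts $\{1,\dots,\wp-1\}$) is sound. Where you genuinely diverge is in the proof of the trace evaluation, the paper's Lemma \ref{lem2.3}. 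The paper argues through minimal polynomials: it invokes Lemma \ref{lem2.2} (Nathanson's order lemma, forcing $k_0=1$ and $d=\wp-1$) to show that $p$ stays a primitive root modulo $\wp^{m-s}$, concludes that $Q_{\wp^{m-s}}$ is irreducible over $\mathbb{F}_p$ and hence the minimal polynomial of $\xi^j$, reads the relative trace off its explicit coefficients, and finishes with transitivity of the trace, which produces the factor $\phi(\wp^m)/\phi(\wp^{m-s})$. You instead compute ${\rm Tr}(\xi^{\ell})$ directly as a Frobenius-orbit sum, obtaining $\frac{\phi(\wp^m)}{\phi(n)}\mu(n)$ in one stroke from the identity that the primitive $n$-th roots of unity sum to $\mu(n)$ (valid modulo $p$ since $p\nmid \wp$, these being the simple roots of $Q_n \bmod p$). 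Your route is more uniform --- all three cases of the lemma fall out of $\mu(\wp^e)=0$ for $e\ge 2$ --- and it avoids trace transitivity, whereas the paper stays within the standard toolkit of \cite[Theorem 2.47(ii)]{[LN]}. One caution: your parenthetical ``hence modulo every $\wp^{e}$ with $1\le e\le m$'' is precisely the nontrivial input that the paper proves via Lemma \ref{lem2.2}; it is true, but your orbit count $\phi(\wp^m)/\phi(n)$ depends on it, so a complete write-up should supply that short order-theoretic argument rather than assert it.
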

\begin{thm}\label{thm2.1.1}
Assume that $p$ is the primitive root modulo $2\wp^m$ with $\wp$ an odd prime and $m$ a positive integer, and $q=p^{\phi(2\wp^m)}$. Let $\xi=g^{\frac{q-1}{2\wp^m}}$. For any $a\in\mathbb{F}_q$, if write
$a=a_1\xi+a_2\xi^2+\cdots+a_{\phi(\wp^m)}\xi^{\phi(\wp^m)}$
with each $a_s\in\mathbb{F}_p$,
then we have
$$S_{2\wp^m}(a)=\sum_{i=0}^{\wp^{m-1}-1}\left(\zeta_p^{\Delta
-i}+\zeta_p^{-\Delta_i}+\sum_{t=1}^{\wp-1}\left(\delta_{i,t}+\delta_{i,t}^{-1}\right)\right),$$
where
\begin{align*}
\Delta_i=-\wp^{m-1}\sum_{k=1}^{\wp-1}(-1)^ka_{k\wp^{m-1}-i}\ \text{and}\ \delta_{i,t}=\zeta_p^{(-1)^t\Delta_i+\wp^ma_{t\wp^{m-1}-i}}.
\end{align*}
\end{thm}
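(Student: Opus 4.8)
The plan is to expand $S_{2\wp^m}(a)=\sum_{i=0}^{2\wp^m-1}\zeta_p^{{\rm Tr}(a\xi^i)}$ and to evaluate each exponent ${\rm Tr}(a\xi^{i})=\sum_{s=1}^{\phi(\wp^m)}a_s\,{\rm Tr}(\xi^{s+i})$ by the linearity of the trace, exactly in the spirit of the computation of $S_4(a)$. Everything hinges on one auxiliary computation: the values of ${\rm Tr}(\xi^{j})$ for $j$ modulo $2\wp^m$. Since $p$ is a primitive root modulo $2\wp^m$, the Frobenius orbit of $\xi$ is precisely $\{\xi^{c}:\gcd(c,2\wp^m)=1\}$, so ${\rm Tr}(\xi^{j})=\sum_{\gcd(c,2\wp^m)=1}\xi^{jc}$ is a Ramanujan-type sum whose integer value I would evaluate and then reduce modulo $p$ (here $p\nmid \wp$ is used). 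The outcome I expect is that this trace vanishes unless $\wp^{m-1}\mid j$; and writing $j\equiv l\wp^{m-1}\pmod{2\wp^m}$ with $0\le l<2\wp$, it equals $\wp^{m-1}(\wp-1)$ when $l\equiv 0$, equals $-\wp^{m-1}(\wp-1)$ when $l\equiv \wp$, equals $-\wp^{m-1}$ when $l$ is even with $l\not\equiv 0$, and equals $\wp^{m-1}$ when $l$ is odd with $l\not\equiv \wp$ (all congruences modulo $2\wp$). I would also record the single structural identity $\xi^{\wp^m}=-1$, which will pair terms two at a time.

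With this table, I would reindex the outer sum by writing each $i\in\{0,\dots,2\wp^m-1\}$ as $i=u\wp^{m-1}+v$ with $0\le v<\wp^{m-1}$ and $0\le u<2\wp$. Because only multiples of $\wp^{m-1}$ survive, the contributing indices are exactly $s=w\wp^{m-1}-v$ with $1\le w\le \wp-1$ (a set depending only on $v$), and then $(w\wp^{m-1}-v)+i\equiv (w+u)\wp^{m-1}$, which gives
\[
{\rm Tr}(a\xi^{u\wp^{m-1}+v})=\sum_{w=1}^{\wp-1}a_{w\wp^{m-1}-v}\,{\rm Tr}(\xi^{(w+u)\wp^{m-1}}),
\]
all exponents read modulo $2\wp^m$. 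Taking $v=i$ and $u=0$, each exponent $w+u=w$ satisfies $w\not\equiv 0,\wp\pmod{2\wp}$, so ${\rm Tr}(\xi^{w\wp^{m-1}})=(-1)^{w+1}\wp^{m-1}$ and the right-hand side collapses to $\Delta_i$; applying $\xi^{\wp^m}=-1$ (equivalently $u=\wp$) then produces $\zeta_p^{-\Delta_i}$.

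For the remaining indices I would take $u=2\wp-t$ with $t\in\{1,\dots,\wp-1\}$, so that $u\equiv -t\pmod{2\wp}$. Now the single value $w=t$ collides, $w+u\equiv 0$, and carries the large trace $\wp^{m-1}(\wp-1)=\wp^m-\wp^{m-1}$; isolating this term turns the $-\wp^{m-1}$ it would otherwise contribute into a clean $+\wp^m a_{t\wp^{m-1}-i}$ shift, which is exactly the extra summand in $\delta_{i,t}$, while the terms $w\ne t$ reassemble (via the sign $(-1)^{w-t}$) into $(-1)^t\Delta_i$. Using $\xi^{\wp^m}=-1$ once more, equivalently taking $u=\wp-t$, gives $\delta_{i,t}^{-1}$. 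Finally I would verify that as $(u,v)$ ranges over $\{0,\dots,2\wp-1\}\times\{0,\dots,\wp^{m-1}-1\}$ the four families $u=0$, $u=\wp$, $u\in\{\wp+1,\dots,2\wp-1\}$ and $u\in\{1,\dots,\wp-1\}$ partition the $2\wp^m$ indices, matching the $2+2(\wp-1)$ summands attached to each $i$ in the claimed identity.

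I expect the main obstacle to be twofold. First, evaluating the trace values ${\rm Tr}(\xi^j)$ cleanly: the Ramanujan-sum computation together with its reduction modulo $p$ must be carried out so that the four regimes for $l$ come out exactly as above. Second, and more delicate, is the bookkeeping around the collision index $w=t$, where the large trace $\wp^{m-1}(\wp-1)$ is precisely the mechanism that creates the characteristic $\wp^m$-coefficient inside $\delta_{i,t}$; keeping the reindexing a genuine bijection, so that each of the $2\wp^m$ original summands is counted in exactly one family, is where most of the care is needed.
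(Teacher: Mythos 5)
Your proposal is correct, and its overall skeleton coincides with the paper's: the same reindexing $j=t\wp^{m-1}+i$ (your $u\wp^{m-1}+v$), the same reduction via linearity of the trace to the values ${\rm Tr}(\xi^{(w+u)\wp^{m-1}})$, and a trace table that is exactly the paper's Lemma 2.6, including all four nonzero regimes and the vanishing case. Where you genuinely diverge is in how that table is obtained and in the bookkeeping. The paper proves Lemma 2.6 by showing (inductively, via Lemma 2.2 and Lemma 2.4) that $p$ remains a primitive root modulo $\wp^{m-s}$ and $2\wp^{m-s}$, so that the cyclotomic polynomials $Q_{\wp^{m-s}}$ and $Q_{2\wp^{m-s}}(x)=Q_{\wp^{m-s}}(-x)$ are irreducible minimal polynomials of $\xi^j$, and then applies transitivity of the trace. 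You instead observe that primitivity of $p$ modulo $2\wp^m$ makes the Frobenius orbit sweep out all residues coprime to $2\wp^m$, so ${\rm Tr}(\xi^j)=\sum_{\gcd(c,2\wp^m)=1}\xi^{jc}$ is the Ramanujan sum $c_{2\wp^m}(j)$ computed in $\mathbb{F}_p$; the M\"obius/geometric-series evaluation goes through verbatim in characteristic $p$ since $p\nmid 2\wp^m$, and H\"older's formula $c_n(j)=\mu(n/g)\phi(n)/\phi(n/g)$ with $g=\gcd(j,n)$ reproduces all five cases at once, with no descent to smaller moduli and no cyclotomic-polynomial machinery. Your second simplification is the pairing $\xi^{\wp^m}=-1$, which makes the $u\mapsto u+\wp$ shift negate the trace and thus derives the $\zeta_p^{-\Delta_i}$ and $\delta_{i,t}^{-1}$ terms for free from the $u=0$ and $u=2\wp-t$ computations, where the paper instead partitions the index set into four families $V_1,\ldots,V_4$ and treats each directly. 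I verified the delicate points you flagged: the contributing indices $s=w\wp^{m-1}-v$ with $1\le w\le\wp-1$ are exactly right for every $v$ (including $v=0$), the collision $w=t$ contributes $(\wp-1)\wp^{m-1}=\wp^m-\wp^{m-1}$ whose excess over the generic $-\wp^{m-1}$ is precisely the $\wp^m a_{t\wp^{m-1}-i}$ shift in $\delta_{i,t}$, the signs $(-1)^{w-t+1}$ reassemble to $(-1)^t\Delta_i$ in agreement with the paper's $\epsilon_t$ casework, and the four families $u=0$, $u=\wp$, $u\in\{1,\ldots,\wp-1\}$, $u\in\{\wp+1,\ldots,2\wp-1\}$ do partition all $2\wp^m$ summands into the $2+2(\wp-1)$ terms attached to each $i$. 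One cosmetic note: the hypothesis you need for the Ramanujan-sum step is $\gcd(p,2\wp^m)=1$ (automatic, since $p$ is a primitive root modulo $2\wp^m$), not merely $p\nmid\wp$; and the trace values are only defined modulo $p$, which is harmless because they enter only as exponents of $\zeta_p$.
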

Before proving Theorems \ref{thm2.1} and \ref{thm2.1.1}, we present some useful lemmas as follows.
\begin{lem}\label{lem2.2}
\cite[Theorem 3.6]{[Nat]} Let $u$ be an odd prime, and let $a\ne \pm 1$ be an integer not divisible by $u$. Let $d$ be the order of $a$ modulo $u$. Let $k_0$ be the largest integer such that $a^d\equiv 1\pmod{u^{k_0}}$. For any positive integer $k$, we then have
$${\rm ord}_{u^k}(a)=\begin{cases}
d,&\text{if}\ 1\le k\le k_0,\\
du^{k-k_0},&\text{if}\ k>k_0.
\end{cases}$$
\end{lem}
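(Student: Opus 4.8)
The plan is to reduce the statement to the Lifting-the-Exponent property of the $u$-adic valuation and then read off the order by a one-line divisibility count. Throughout write $v_u(m)$ for the exact power of $u$ dividing a nonzero integer $m$, so that the defining condition on $k_0$ reads $v_u(a^d-1)=k_0$. The hypothesis $a\ne\pm 1$ guarantees $a^d-1\ne 0$ (since $|a|^d\ge 2^d>1$), so $k_0$ is a well-defined integer, and $k_0\ge 1$ because $u\mid a^d-1$ by the definition of $d$. The first step is the elementary remark that ${\rm ord}_{u^k}(a)$ is the least positive $m$ with $u^k\mid a^m-1$, and that any such $m$ is a multiple of $d$: reducing $a^m\equiv 1\pmod{u^k}$ modulo $u$ and using $d={\rm ord}_u(a)$ forces $d\mid m$. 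Hence I would write every candidate exponent as $m=dt$ and set $b:=a^d$, so that $b\equiv 1\pmod u$ and $v_u(b-1)=k_0$.

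The heart of the argument is the identity $v_u(b^t-1)=k_0+v_u(t)$ for every positive integer $t$, valid because $u$ is odd and $u\mid b-1$. I would establish it in two pieces. When $\gcd(t,u)=1$, factor $b^t-1=(b-1)(b^{t-1}+\cdots+b+1)$; since $b\equiv 1\pmod u$ the second factor is congruent to $t$ modulo $u$ and hence a unit mod $u$, giving $v_u(b^t-1)=v_u(b-1)=k_0$. For the prime-power case I would prove the single lifting step $v_u(B^u-1)=v_u(B-1)+1$ whenever $v_u(B-1)\ge 1$: writing $B=1+u^sc$ with $s=v_u(B-1)\ge 1$ and $u\nmid c$, the expansion of $(1+u^sc)^u$ has a linear term $u^{s+1}c$ of valuation exactly $s+1$, while every higher term $\binom{u}{j}(u^sc)^j$ with $2\le j\le u$ has valuation strictly larger than $s+1$ (for $2\le j\le u-1$ because $u\mid\binom{u}{j}$ gives valuation at least $1+2s$, and for $j=u$ because $u^{us}c^u$ has valuation $us>s+1$ as $u\ge 3$). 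Iterating this step from the base $v_u(b-1)=k_0$ yields $v_u(b^{u^j}-1)=k_0+j$, and writing $t=u^{v_u(t)}t'$ with $\gcd(t',u)=1$ and combining the two pieces gives the full identity $v_u(b^t-1)=k_0+v_u(t)$.

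With the identity in hand the conclusion is immediate. We have ${\rm ord}_{u^k}(a)=d\,t_{\min}$, where $t_{\min}$ is the least positive $t$ with $v_u(b^t-1)\ge k$, that is, with $k_0+v_u(t)\ge k$, i.e. $v_u(t)\ge k-k_0$. If $1\le k\le k_0$ the condition is vacuous, so $t_{\min}=1$ and ${\rm ord}_{u^k}(a)=d$; if $k>k_0$ the least $t$ with $v_u(t)\ge k-k_0$ is $t=u^{k-k_0}$, whence ${\rm ord}_{u^k}(a)=du^{k-k_0}$. I expect the only genuine obstacle to be the single lifting step $v_u(B^u-1)=v_u(B-1)+1$, where oddness of $u$ must be used carefully: the top binomial term forces $us>s+1$, which needs $u\ge 3$, and indeed the analogous statement fails for $u=2$ (already $v_2(3^2-1)=3$ rather than $2$), which is exactly why the hypothesis restricts $u$ to odd primes.
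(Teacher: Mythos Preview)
Your proof is correct. The paper itself does not prove Lemma~\ref{lem2.2}; it simply quotes the result as \cite[Theorem~3.6]{[Nat]} and uses it as a black box. Your argument via the $u$-adic valuation and the lifting step $v_u(B^u-1)=v_u(B-1)+1$ is the standard Lifting-the-Exponent proof of this fact (and is essentially what one finds in Nathanson's book). The reduction to $m=dt$ with $b=a^d$, the identity $v_u(b^t-1)=k_0+v_u(t)$, and the reading off of $t_{\min}$ are all sound; your care with the top binomial term $j=u$, which is where $u\ge 3$ genuinely enters, is exactly the point that distinguishes the odd-prime case from $u=2$.
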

\begin{lem}\label{lem2.3}
Assume that $p$ is the primitive root modulo $\wp^m$ with $\wp$ an odd prime and $m$ a positive integer, and $q=p^{\phi(\wp^m)}$. Let $\xi=g^{\frac{q-1}{\wp^m}}$. Then for any nonnegative integer $j$ we have
$${\rm Tr}(\xi^j)=\begin{cases}
(\wp-1)\wp^{m-1},& \text{if}\ \wp^m\mid j,\\
-\wp^{m-1},& \text{if}\ \wp^{m-1}\mid j\ \text{but}\ \wp^m\nmid j,\\
0,& \text{otherwise}.
\end{cases}$$
\end{lem}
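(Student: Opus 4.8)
The plan is to exploit the hypothesis that $p$ is a primitive root modulo $\wp^m$ in order to identify the Galois orbit of $\xi$ with the full unit group of $\Z/\wp^m\Z$, after which the whole computation collapses to elementary geometric sums of roots of unity. Since ${\rm ord}_{\wp^m}(p)=\phi(\wp^m)$, the Frobenius $\sigma\colon x\mapsto x^p$ generates ${\rm Gal}(\mathbb{F}_q/\mathbb{F}_p)$, a cyclic group of order $\phi(\wp^m)$. Hence for every $j$,
\[
{\rm Tr}(\xi^j)=\sum_{i=0}^{\phi(\wp^m)-1}(\xi^j)^{p^i}=\sum_{i=0}^{\phi(\wp^m)-1}\xi^{jp^i}.
\]
The \emph{key step} is the observation that, because $p$ is a primitive root modulo $\wp^m$, the residues $p^0,p^1,\dots,p^{\phi(\wp^m)-1}$ modulo $\wp^m$ are precisely a listing of the $\phi(\wp^m)$ units, each occurring once; this is exactly the statement that $p$ generates $(\Z/\wp^m\Z)^*$. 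As $\xi$ has order $\wp^m$, the term $\xi^{jp^i}$ depends only on $jp^i\bmod\wp^m$, so the trace rewrites as the unit sum
\[
{\rm Tr}(\xi^j)=\sum_{\substack{0\le u<\wp^m\\ \gcd(u,\wp)=1}}\xi^{ju}.
\]

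Next I would evaluate this unit sum by inclusion--exclusion, subtracting off the residues divisible by $\wp$:
\[
\sum_{\substack{0\le u<\wp^m\\ \gcd(u,\wp)=1}}\xi^{ju}=\sum_{u=0}^{\wp^m-1}\xi^{ju}-\sum_{v=0}^{\wp^{m-1}-1}\xi^{j\wp v}.
\]
Each piece is a geometric sum over a complete set of residues. Because $\gcd(\wp,p)=1$, the polynomial $X^{\wp^m}-1$ is separable over $\mathbb{F}_p$, so $\xi$ is a genuine primitive $\wp^m$-th root of unity; consequently $\xi^{j}=1$ iff $\wp^m\mid j$ and $\xi^{\wp j}=1$ iff $\wp^{m-1}\mid j$, while in the remaining cases the geometric sums telescope to $0$ via $\xi^{j\wp^m}=1$. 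Thus the first sum equals $\wp^m$ when $\wp^m\mid j$ and $0$ otherwise, and the second equals $\wp^{m-1}$ when $\wp^{m-1}\mid j$ and $0$ otherwise, all read in $\mathbb{F}_p$. Combining gives
\[
{\rm Tr}(\xi^j)=\wp^m\,[\wp^m\mid j]-\wp^{m-1}\,[\wp^{m-1}\mid j],
\]
which yields $(\wp-1)\wp^{m-1}$, $-\wp^{m-1}$, and $0$ in the three respective cases of the statement.

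The only genuine subtlety, and the step I would be most careful about, is the characteristic-$p$ bookkeeping: one must treat $\wp^m$ and $\wp^{m-1}$ as integers reduced modulo $p$ (they are nonzero in $\mathbb{F}_p$ since $\wp\neq p$), and justify the vanishing of the geometric sums through the separability that makes $\xi$ have exact multiplicative order $\wp^m$. As an independent cross-check I would keep the cyclotomic-polynomial route in reserve: Lemma~\ref{lem2.2} forces $p$ to be a primitive root modulo $\wp^r$ for every $1\le r\le m$, so $Q_{\wp^r}$ is irreducible over $\mathbb{F}_p$; reading off its subleading coefficient together with the transitivity ${\rm Tr}_{\mathbb{F}_q/\mathbb{F}_p}={\rm Tr}_{\mathbb{F}_{p^{\phi(\wp^r)}}/\mathbb{F}_p}\circ{\rm Tr}_{\mathbb{F}_q/\mathbb{F}_{p^{\phi(\wp^r)}}}$ reproduces the same three values and corroborates the elementary computation.
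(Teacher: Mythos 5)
Your proof is correct, and it takes a genuinely different route from the paper's. The paper factors $j=\wp^s j'$ with $\gcd(j',\wp)=1$, proves (via the order-lifting Lemma~\ref{lem2.2}) that $p$ remains a primitive root modulo every $\wp^{m-s}$, concludes that $Q_{\wp^{m-s}}$ is irreducible and is the minimal polynomial of $\xi^j$, reads the subfield trace off its explicit coefficients, and finishes with transitivity of the trace. You instead use the hypothesis ${\rm ord}_{\wp^m}(p)=\phi(\wp^m)$ exactly once, to identify the Frobenius orbit $\{p^i \bmod \wp^m\}$ with $(\mathbb{Z}/\wp^m\mathbb{Z})^*$, turning ${\rm Tr}(\xi^j)$ into the unit sum $\sum_{\gcd(u,\wp)=1}\xi^{ju}$ --- i.e., the Ramanujan sum $c_{\wp^m}(j)$ read modulo $p$ --- and then evaluate it by inclusion--exclusion of two geometric sums. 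This is more elementary and self-contained: it needs neither Lemma~\ref{lem2.2}, nor irreducibility of cyclotomic polynomials, nor trace transitivity, and it generalizes immediately to any modulus $N$ with $p$ a primitive root modulo $N$ via the von Sterneck--H\"older formula $c_N(j)=\mu\bigl(N/\gcd(j,N)\bigr)\phi(N)/\phi\bigl(N/\gcd(j,N)\bigr)$; in particular it would yield Lemma~\ref{lem2.6.1} (the case $N=2\wp^m$) by the same one-line computation, whereas the paper must redo the cyclotomic analysis there. What the paper's heavier route buys is the intermediate structural claim (primitivity of $p$ modulo all $\wp^{m-s}$, irreducibility of the relevant cyclotomic polynomials), which it explicitly reuses in the proof of Lemma~\ref{lem2.6.1} and in identifying minimal polynomials later (e.g., in the proof of Theorem~\ref{thm4.1}). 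Two cosmetic points: your separability aside is unnecessary, since $\xi=g^{(q-1)/\wp^m}$ has exact order $\wp^m$ directly from $g$ having order $q-1$ and $\wp^m\mid q-1$; and your vanishing argument is cleanest stated as $(\xi^j-1)\sum_{u=0}^{\wp^m-1}\xi^{ju}=\xi^{j\wp^m}-1=0$ with $\xi^j\neq 1$. Your handling of the characteristic-$p$ bookkeeping (values in $\mathbb{F}_p$, with $\wp^{m-1}\not\equiv 0 \pmod p$ since $\wp\neq p$) matches the paper's implicit convention, as these traces are used only as exponents of $\zeta_p$.
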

\begin{proof}
It is easy to see that Lemma \ref{lem2.2} holds for $j=0$. In the following, let $j\ge 1$ and write $j=\wp^s j'$ with $s\ge 0$ and $\gcd(j',\wp)=1$. The remaining proof can be divided into two cases as follows.

{\sc case 1}. $s\ge m$. One then has $\xi^j=1$ since $\xi$ is a primitive $\wp^m$-th root of unity. So ${\rm Tr}(\xi^j)={\rm Tr}(1)=\phi(\wp^m)$.

{\sc case 2}. $s<m$. First, we know that $\xi^j$ is a primitive $\wp^{m-s}$-th root of unity. Moreover, one claims that $p$ is a primitive root modulo $\wp^{m-s}$ for all $s=0,1,\ldots,m-1$ since $p$ is a primitive root modulo $\wp^{m}$. Obviously, the claim is true for $m=1$. Now let $m\ge 2$. Let ${\rm ord}_{\wp}(p)=d$, and let $k_0$ be the largest integer such that $\wp^{k_0}\mid (p^d-1)$. Note that  $\gcd(d,\wp)=1$ since $d\mid \phi(\wp)$, and ${\rm ord}_{\wp^{m}}(p)=(\wp-1)\wp^{m-1}$. It then follows from Lemma \ref{lem2.2} that $k_0<m$ and $(\wp-1)\wp^{m-1}=d\wp^{m-k_0}$. This forces that $k_0=1$ and $d=\wp-1$. By using Lemma \ref{lem2.2} again, we have that ${\rm ord}_{\wp^{m-s}}(p)=(\wp-1)\wp^{m-s-1}=\phi(\wp^{m-s})$ for each $s$ with $0\le s\le m-1$. Hence, the claim is true. From this claim, together with the result of \cite[Theorem 2.47(ii)]{[LN]}, we have that the $\wp^{m-s}$-th cyclotomic polynomial $Q_{\wp^{m-s}}(x)$ is irreducible over $\mathbb{F}_p$ for any $0\le s\le m-1$. Note that $Q_{\wp^{m-s}}(\xi^{j})=0$. It implies that $Q_{\wp^{m-s}}(x)$ is the minimal polynomial of $\xi^{j}$ over
$\mathbb{F}_p$. It is checked that
$$Q_{\wp^{m-s}}(x)=x^{(\wp-1)\wp^{m-s-1}}+x^{(\wp-2)\wp^{m-s-1}}
+\cdots+x^{\wp^{m-s-1}}+1.$$
Therefore, for any positive integer $n_1$ and $n_2$ with $n_1\mid n_2$ and $n_2\mid \phi(\wp^m)$, if let ${\rm Tr}_{n_1,n_2}$ be the trace function from $\mathbb{F}_{p^{n_2}}$ onto $\mathbb{F}_{p^{n_1}}$, then
$${\rm Tr}_{1,\phi(\wp^{m-s})}(\xi^{j})=\begin{cases}
-1,&\text{if}\ s=m-1,\\
0,&\text{otherwise}.
\end{cases}$$
It follows from the transitivity of traces that
$${\rm Tr}(\xi^{j})={\rm Tr}_{1,\phi(\wp^m)}(\xi^{j})
={\rm Tr}_{1,\phi(\wp^{m-s})}\left({\rm Tr}_{\phi(\wp^{m-s}),\phi(\wp^{m})}(\xi^{j})\right)
=\frac{\phi(\wp^m)}{\phi(\wp^{m-s})}{\rm Tr}_{1,\phi(\wp^{m-s})}(\xi^{j}),$$
which is $-\wp^{m-1}$ if $s=m-1$, or $0$ otherwise.

Putting {\sc cases} 1 and 2 together, the result of Lemma \ref{lem2.3} is obtained.
\end{proof}
\begin{lem}\label{lem2.5.1}
Let $\wp_1$ and $\wp_2$ be two odd positive integers with $\gcd(\wp_1,\wp_2)=1$. Let $d_1$ and $d_2$ be positive integers. Then $\wp_1^{d_1}\equiv 1\pmod{\wp_2^{d_2}}$ if and only if $\wp_1^{d_1}\equiv 1\pmod{2\wp_2^{d_2}}$. Consequently, $${\rm ord}_{\wp_2^{m}}(\wp_1)={\rm ord}_{2\wp_2^{m}}(\wp_1)$$ for any positive  integer $m$.
\end{lem}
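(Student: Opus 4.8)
The plan is to reduce the biconditional to a single coprimality observation and then to deduce the statement about orders directly from it. The backward direction, namely $\wp_1^{d_1}\equiv 1\pmod{2\wp_2^{d_2}}\Rightarrow \wp_1^{d_1}\equiv 1\pmod{\wp_2^{d_2}}$, is immediate since $\wp_2^{d_2}\mid 2\wp_2^{d_2}$. So the whole content lies in the forward direction, and the idea is that the extra factor of $2$ in the modulus comes for free from the oddness of $\wp_1$.

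For the forward direction I would first record the two elementary facts that make the argument work. Because $\wp_1$ is odd, $\wp_1^{d_1}$ is odd, whence $2\mid \wp_1^{d_1}-1$. Because $\wp_2$ is odd, $\wp_2^{d_2}$ is odd, whence $\gcd(2,\wp_2^{d_2})=1$. Now assuming $\wp_2^{d_2}\mid \wp_1^{d_1}-1$, the number $\wp_1^{d_1}-1$ is divisible by both $2$ and $\wp_2^{d_2}$; since these two divisors are coprime, it is divisible by their product, i.e. $2\wp_2^{d_2}\mid \wp_1^{d_1}-1$, which is exactly $\wp_1^{d_1}\equiv 1\pmod{2\wp_2^{d_2}}$. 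This establishes the equivalence.

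For the consequence about orders, I would first note that $\gcd(\wp_1,2\wp_2^{m})=1$ (again because $\wp_1$ is odd and coprime to $\wp_2$), so that ${\rm ord}_{2\wp_2^{m}}(\wp_1)$ is defined. Then I would apply the equivalence just proved with $d_2=m$ and $d_1=d$ ranging over all positive integers: for every $d$, one has $\wp_1^{d}\equiv 1\pmod{\wp_2^{m}}$ if and only if $\wp_1^{d}\equiv 1\pmod{2\wp_2^{m}}$. Hence the set of exponents realizing the congruence to modulus $\wp_2^{m}$ coincides with the set realizing it to modulus $2\wp_2^{m}$, and the two orders, being the least elements of these identical sets, must be equal.

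I do not anticipate any real obstacle here; the only point to be careful about is to invoke coprimality of $2$ and $\wp_2^{d_2}$ explicitly, rather than to assume that $2\wp_2^{d_2}\mid \wp_1^{d_1}-1$ requires a stronger hypothesis. The sole \emph{idea}, such as it is, is simply recognizing that the oddness of $\wp_1$ automatically supplies the additional factor of $2$.
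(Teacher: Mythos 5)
Your proof is correct and is essentially the paper's own argument, merely written out in full: the paper's one-line proof also rests on the observation that since $\wp_1$ and $\wp_2$ are odd, $\wp_2^{d_2}\mid(\wp_1^{d_1}-1)$ if and only if $2\wp_2^{d_2}\mid(\wp_1^{d_1}-1)$, from which the equality of orders follows immediately. Your explicit handling of the coprimality of $2$ and $\wp_2^{d_2}$ and of the coincidence of the two sets of exponents simply fills in the details the paper leaves tacit.
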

\begin{proof}
Since $\wp_1$ and $\wp_2$ are odd, $\wp_2^{d_2}\mid (\wp_1^{d_1}-1)$ if and only if $2\wp_2^{d_2}\mid (\wp_1^{d_1}-1)$. Lemma \ref{lem2.5.1} follows immediately.
\end{proof}
\begin{lem}\label{lem2.6.1}
Assume that $p$ is the primitive root modulo $2\wp^m$ with $\wp$ an odd prime and $m$ a positive integer, and $q=p^{\phi(2\wp^m)}$. Let $\xi=g^{\frac{q-1}{2\wp^m}}$. Then for any nonnegative integer $j$ we have
$${\rm Tr}(\xi^j)=\begin{cases}
(\wp-1)\wp^{m-1},& \text{if}\ 2\wp^m\mid j,\\
-(\wp-1)\wp^{m-1},& \text{if}\ \wp^m\mid j\ \text{but}\ 2\nmid j,\\
-\wp^{m-1},& \text{if}\ 2\wp^{m-1}\mid j\ \text{but}\ \wp^m\nmid j,\\
\wp^{m-1},& \text{if}\ \wp^{m-1}\mid j\ \text{but}\ \wp^m\nmid j,\ 2\nmid j,\\
0,& \text{otherwise}.
\end{cases}$$
\end{lem}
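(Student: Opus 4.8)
The plan is to mirror the proof of Lemma~\ref{lem2.3}, enlarging the case analysis to account for the prime $2$ dividing the modulus and using Lemma~\ref{lem2.5.1} to transport the primitive-root hypothesis from $\wp^k$ to $2\wp^k$. First I would dispose of the top case: if $2\wp^m\mid j$ then $\xi^j=1$ and ${\rm Tr}(\xi^j)={\rm Tr}(1)=\phi(2\wp^m)=(\wp-1)\wp^{m-1}$. For general $j\ge 1$ the key reduction is that $\xi^j$ is a primitive $M$-th root of unity, where $M:=2\wp^m/\gcd(j,2\wp^m)$, so that everything is governed by $M$. Writing the $\wp$-adic valuation of $j$ as $s$ and recording the parity of $j$, one finds that $M$ ranges over the divisors $1,2,\wp,2\wp,\wp^2,2\wp^2,\dots$ of $2\wp^m$, and I would record exactly which arithmetic condition on $j$ produces each $M$: $M=1$ iff $2\wp^m\mid j$; $M=2$ iff $\wp^m\mid j$ but $2\nmid j$; $M=\wp$ iff $2\wp^{m-1}\mid j$ but $\wp^m\nmid j$; $M=2\wp$ iff $\wp^{m-1}\mid j$, $\wp^m\nmid j$ and $2\nmid j$; and $\wp^2\mid M$ precisely in the remaining (``otherwise'') case where $s\le m-2$.

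The second step is to show that $p$ is a primitive root modulo each relevant $M$, so that \cite[Theorem 2.47(ii)]{[LN]} applies. For $M=\wp^k$ this is exactly the claim already established inside the proof of Lemma~\ref{lem2.3} (namely that $p$ being a primitive root modulo $\wp^m$ forces it to be one modulo $\wp^k$ for every $1\le k\le m$). For $M=2\wp^k$, Lemma~\ref{lem2.5.1} gives ${\rm ord}_{2\wp^k}(p)={\rm ord}_{\wp^k}(p)=\phi(\wp^k)=\phi(2\wp^k)$, so $p$ is a primitive root modulo $2\wp^k$ as well; the case $M=2$ is trivial since $p$ is odd. Consequently the cyclotomic polynomial $Q_M(x)$ is irreducible over $\mathbb{F}_p$ and is therefore the minimal polynomial of $\xi^j$, so that $[\mathbb{F}_p(\xi^j):\mathbb{F}_p]=\phi(M)$ and the $\mathbb{F}_p$-conjugates of $\xi^j$ are precisely the $\phi(M)$ primitive $M$-th roots of unity.

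From here the trace falls out by transitivity. Since the conjugates of $\xi^j$ over $\mathbb{F}_p$ are exactly the primitive $M$-th roots of unity, ${\rm Tr}_{1,\phi(M)}(\xi^j)$ equals their sum, i.e. the M\"obius value $\mu(M)$ (equivalently, minus the coefficient of $x^{\phi(M)-1}$ in $Q_M$, which one can also read off from $Q_{2\wp^k}(x)=Q_{\wp^k}(-x)$). Then
$${\rm Tr}(\xi^j)=\frac{\phi(2\wp^m)}{\phi(M)}\,{\rm Tr}_{1,\phi(M)}(\xi^j)=\frac{(\wp-1)\wp^{m-1}}{\phi(M)}\,\mu(M),$$
and plugging in the four nontrivial values of $M$ gives $-(\wp-1)\wp^{m-1}$ for $M=2$, $-\wp^{m-1}$ for $M=\wp$, and $\wp^{m-1}$ for $M=2\wp$, while $\mu(M)=0$ whenever $\wp^2\mid M$ yields the last (``otherwise'') case.

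I expect the main obstacle to be organizational rather than deep: keeping the case split on the parity of $j$ and on $v_\wp(j)$ aligned with the resulting modulus $M$, and confirming that the primitive-root property survives the extra factor $2$. The latter is the only genuinely new ingredient compared with Lemma~\ref{lem2.3}, and it is supplied cleanly by Lemma~\ref{lem2.5.1}; once $Q_{2\wp^k}$ is known to be irreducible, the trace computation is identical in spirit to the odd-prime-power case.
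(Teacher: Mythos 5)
Your proposal is correct and takes essentially the same route as the paper's proof: both reduce to the multiplicative order $M$ of $\xi^j$, transport the primitive-root hypothesis from $\wp^{m-s}$ to $2\wp^{m-s}$ via Lemma \ref{lem2.5.1} so that $Q_M(x)$ is irreducible over $\mathbb{F}_p$ and is the minimal polynomial of $\xi^j$, and finish by transitivity of the trace. The only difference is cosmetic bookkeeping: where the paper splits on the pair $(e,s)$ in $j=2^e\wp^s j'$ and reads the bottom-level trace off the explicit coefficients of $Q_{\wp^{m-s}}$ together with $Q_{2\wp^{m-s}}(x)=Q_{\wp^{m-s}}(-x)$, you organize the cases by $M$ and package that same trace uniformly as $\mu(M)$, yielding the closed form $\frac{\phi(2\wp^m)}{\phi(M)}\mu(M)$.
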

\begin{proof}
The proof of Lemma \ref{lem2.6.1} can be carried out by the main approach used in proving Lemma \ref{lem2.3}. The following are the details. First of all, if $j=0$, then ${\rm Tr}(\xi^j)={\rm Tr}(1)=\phi(2\wp^m)=(\wp-1)\wp^{m-1}$. Now let $j\ge 1$ in what follows, and write $j=2^e\wp^sj'$ with $e,s$ nonnegative integers and $j$ an odd positive integer coprime to $\wp$.\\
$\bullet$ Let $e\ge 1$ and $s\ge m$. In this case $\xi^j=1$. So ${\rm Tr}(\xi^j)=(\wp-1)\wp^{m-1}$.\\
$\bullet$ Let $e=0$ and $s\ge m$. One computes that $\xi^j=(g^{\frac{q-1}{2\wp^m}})^j=(g^{\frac{q-1}{2}})^{\wp^{s-m}j'}=-1$. So ${\rm Tr}(\xi^j)=-(\wp-1)\wp^{m-1}$.\\
$\bullet$ Let $e\ge 1$ and $s<m$. We know that $\xi^j=(g^{\frac{q-1}{2\wp^m}})^j=(g^{\frac{q-1}{\wp^{m-s}}})^{2^{e-1}j'}$. So ${\rm Tr}(\xi^j)$ is a primitive $\wp^{m-s}$-th root of unity. On the other hand, note that $p$ is a primitive root modulo $2\wp^m$. By Lemma \ref{lem2.5.1}, $p$ is a primitive root modulo $\wp^m$. So from the second case of the proof of Lemma \ref{lem2.3}, we derive that
$${\rm Tr}(\xi^j)=\begin{cases}
-\wp^{m-1},&\text{if}\ s=m-1,\\
0,&\text{if}\ s<m-1.
\end{cases}$$\\
$\bullet$ Let $e=0$ and $s<m$. One has that $\xi^j=(g^{\frac{q-1}{2\wp^m}})^j=(g^{\frac{q-1}{2\wp^{m-s}}})^{j'}$. So ${\rm Tr}(\xi^j)$ is a primitive $2\wp^{m-s}$-th root of unity. By Lemma \ref{lem2.5.1}, $p$ is a primitive root modulo $\wp^m$ since $p$ is a primitive root modulo $2\wp^m$. It follows from the claim in the proof of Lemma \ref{lem2.3} that $p$ is a primitive root modulo $\wp^{m-s}$. From Lemma \ref{lem2.5.1} we know that $p$ is also a primitive root modulo $2\wp^{m-s}$. Therefore, the $2\wp^{m-s}$-th cyclotomic polynomial $Q_{2\wp^{m-s}}(x)$ is irreducible over $\fp$. Consequently, $Q_{2\wp^{m-s}}(x)$ is the minimal polynomial of $\xi^j$ over $\fp$. Note that $Q_{2\wp^{m-s}}(x)=Q_{\wp^{m-s}}(-x)$ (see \cite[Exercise 2.57(d)]{[LN]}). So one arrives at
$${\rm Tr}(\xi^j)=\begin{cases}
\wp^{m-1},&\text{if}\ s=m-1\\
0,&\text{if}\ s<m-1
\end{cases}$$
by following the similar way as the proof of Lemma \ref{lem2.3}.

Combining all above, we complete the proof of Lemma \ref{lem2.6.1}.
\end{proof}

\noindent{\textit{Proof of Theorem \ref{thm2.1}}.} Let $\zeta_p$ be the primitive $p$-th root of unity. From the division algorithm, we know that for each integer $j$ with $0\le j\le \wp^m-1$ there exists a unique integer pair $(t,i)$ with $0\le t\le \wp-1, 0\le i\le \wp^{m-1}-1$ such that $j=t\wp^{m-1}+i$. It implies that
\begin{align}\label{c2-2}
S_{\wp^m}(a)=\sum_{j=0}^{\wp^m-1}\chi(a\xi^j)
=\sum_{t=0}^{\wp-1}\sum_{i=0}^{\wp^{m-1}-1}\chi(a\xi^{t\wp^{m-1}+i})
=\sum_{t=0}^{\wp-1}\sum_{i=0}^{\wp^{m-1}-1}\zeta_p^{{\Tr}(a\xi^{t\wp^{m-1}+i})}.
\end{align}
Note that
\begin{align}\label{c2-3}
{\rm Tr}(a\xi^{t\wp^{m-1}+i})=
\sum_{s=1}^{\phi(\wp^m)}a_s{\rm Tr}(\xi^{t\wp^{m-1}+i+s})
\end{align}
since $a=\sum_{s=1}^{\phi(\wp^m)}a_s\xi^s$. Let $W$ be a set defined by
$$W:=\left\{(i,t,s)\in \mathbb{N}^3:\ i\le\wp^{m-1}-1,t\le \wp-1\ \text{and}\ 1\le s\le \phi(\wp^m)\right\}.$$
For each $(i,t,s)\in W$, let us compute ${\rm Tr}(\xi^{t\wp^{m-1}+i+s})$ in the following.
Define two subsets of $W$ as
$$W_1:=\left\{(i,t,s)\in W:\ \wp^{m-1}\mid (t\wp^{m-1}+i+s)\right\}$$
and
$$W_2:=\left\{(i,t,s)\in W:\ \wp^m\mid (t\wp^{m-1}+i+s)\right\}.$$
On the one hand, it is clear that
\begin{align}\label{c2-4}
W_1=\left\{(i,t,s)\in W:\ s=k\wp^{m-1}-i\ \text{with}\ 1\le k\le \wp-1\right\}.
\end{align}
On the other hand, for each $(i,t,s)\in W$, since $1\le t\wp^{m-1}+i+s<2\wp^m$ one has that $\wp^m\mid (t\wp^{m-1}+i+s)$ if and only if $t\wp^{m-1}+i+s=\wp^m$
if and only if $s=(\wp-t)\wp^{m-1}-i$. This means that
\begin{align}\label{c2-5}
W_2=\left\{(i,t,s)\in W:\ s=(\wp-t)\wp^{m-1}-i\ \text{and}\ t\ne 0\right\}.
\end{align}
It then follows from Lemma \ref{lem2.3} that for any $(i,t,s)\in W$
\begin{align}\label{c2-6}
{\rm Tr}(\xi^{t\wp^{m-1}+i+s})=\begin{cases}
(\wp-1)\wp^{m-1},& \text{if}\ (i,t,s)\in W_2,\\
-\wp^{m-1},& \text{if}\ (i,t,s)\in W_1\setminus W_2,\\
0,& \text{otherwise}.
\end{cases}
\end{align}
 Combining (\ref{c2-3})-(\ref{c2-6}), we derive that
\begin{align}\label{c2-7}
{\rm Tr}(a\xi^{t\wp^{m-1}+i})=
-\wp^{m-1}\sum_{k=1}^{\wp-1}a_{k\wp^{m-1}-i}+\epsilon,
\end{align}
where $\epsilon$ is equal to $0$ if $t=0$, and $\wp^{m}a_{(\wp-t)\wp^{m-1}-i}$ otherwise.
For convenience, define a notation $E(x):=\zeta_p^x$. So, putting (\ref{c2-7}) into (\ref{c2-2}), we have
\begin{align*}
S_{\wp^m}(a)=&\sum_{i=0}^{\wp^{m-1}-1}E\big(
-\wp^{m-1}\sum_{k=1}^{\wp-1}a_{k\wp^{m-1}-i}\big)\\
&\quad +\sum_{t=1}^{\wp-1}
\sum_{i=0}^{\wp^{m-1}-1}E\big(\wp^{m}a_{(\wp-t)\wp^{m-1}-i}-\wp^{m-1}
\sum_{k=1}^{\wp-1}a_{k\wp^{m-1}-i}\big)\\
&=\sum_{i=0}^{\wp^{m-1}-1}E\big(
-\wp^{m-1}\sum_{k=1}^{\wp-1}a_{k\wp^{m-1}-i}\big)
\big(1+\sum_{t=1}^{\wp-1}E\big(\wp^{m}
a_{(\wp-t)\wp^{m-1}-i}\big)\big)\\
&=\sum_{i=0}^{\wp^{m-1}-1}e_i(a)(f_i(a)+1),
\end{align*}
as desired.
The proof of Theorem \ref{thm2.1} is concluded.\hfill$\Box$\\

Next, we proceed to prove Theorem \ref{thm2.1.1}.

\noindent{\textit{Proof of Theorem \ref{thm2.1.1}}.} For any integer $j$ with $0\le j\le 2\wp^m-1$, let $(t,i)$ be the unique integer pair with $0\le t\le 2\wp-1, 0\le i\le \wp^{m-1}-1$ such that $j=t\wp^{m-1}+i$. It follows that
\begin{align}\label{c2-2-1}
S_{2\wp^m}(a)=\sum_{t=0}^{2\wp-1}\sum_{i=0}^{\wp^{m-1}-1}
\chi(a\xi^{t\wp^{m-1}+i})
=\sum_{t=0}^{2\wp-1}\sum_{i=0}^{\wp^{m-1}-1}\zeta_p^{{\rm Tr}(a\xi^{t\wp^{m-1}+i})}.
\end{align}
Note that
\begin{align}\label{c2-3-1}
{\rm Tr}(a\xi^{t\wp^{m-1}+i})=
\sum_{s=1}^{\phi(\wp^m)}a_s{\rm Tr}(\xi^{t\wp^{m-1}+i+s})
\end{align}
since $a=\sum_{s=1}^{\phi(\wp^m)}a_s\xi^s$. In the following, one would like to compute ${\rm Tr}(\xi^{t\wp^{m-1}+i+s})$. For this purpose, let $V:=\{(i,t,s)\in\mathbb{N}:\ i\le \wp^{m-1}-1,t\le 2\wp-1,1\le s\le\phi(\wp^m)\}$, and define four subsets of $V$ by
$$V_1:=\{(i,t,s)\in V:\ 2\wp^m\mid (t\wp^{m-1}+i+s)\},$$
$$V_2:=\{(i,t,s)\in V:\ \wp^m\mid (t\wp^{m-1}+i+s)\ \text{but}\ 2\nmid (t\wp^{m-1}+i+s)\},$$
$$V_3:=\{(i,t,s)\in V:\ 2\wp^{m-1}\mid (t\wp^{m-1}+i+s)\ \text{but}\ \wp^m\nmid (t\wp^{m-1}+i+s)\}$$
and
$$V_4:=\{(i,t,s)\in V:\ \wp^{m-1}\mid (t\wp^{m-1}+i+s)\ \text{but}\ \wp^m\nmid (t\wp^{m-1}+i+s)\ \text{and}\ 2\nmid (t\wp^{m-1}+i+s)\}.$$
Note that $1\le t\wp^{m-1}+i+s\le 3\wp^m-\wp^{m-1}$. So $V_1$ and $V_2$ can be simplified to
$$V_1=\{(i,t,s)\in V:\ s=(2\wp-t)\wp^{m-1}-i,\wp+1\le t\le 2\wp-1\}$$
and
$$V_2=\{(i,t,s)\in V:\ s=(\wp-t)\wp^{m-1}-i,1\le t\le \wp-1\}.$$
For $(i,t,s)\in V$, one observes that $2\wp^{m-1}\mid (t\wp^{m-1}+i+s)$ if and only if $s=k\wp^{m-1}-i$ with $k$ satisfying $1\le k\le\wp-1$ and having the same parity as $t$. It then follows that
$$V_3=\{(i,t,s)\in V:\ s=k\wp^{m-1}-i, 1\le k\le \wp-1,\ \text{and}\ k\equiv t\pmod{2}\}\setminus(V_1\cup V_2)$$
and
$$V_4=\{(i,t,s)\in V:\ s=k\wp^{m-1}-i, 1\le k\le \wp-1,\ \text{and}\ k\not\equiv t\pmod{2}\}\setminus(V_1\cup V_2).$$
Therefore, by Lemma \ref{lem2.6.1} and Equation (\ref{c2-3-1}), we have that
\begin{align}\label{c2-11-1}
{\rm Tr}(a\xi^{t\wp^{m-1}+i})=(-1)^{t-1}\wp^{m-1}\sum_{k=1}^{\wp-1}
(-1)^ka_{k\wp^{m-1}-i}+\epsilon_t,
\end{align}
where
$$\epsilon_t=
\begin{cases}
0,&\text{if}\ t=0\ \text{or}\ \wp,\\
-\wp^ma_{(\wp-t)\wp^{m-1}-i},&\text{if}\ 1\le t\le \wp-1,\\
\wp^ma_{(2\wp-t)\wp^{m-1}-i},&\text{if}\ \wp+1\le t\le 2\wp-1.
\end{cases}$$
Substituting (\ref{c2-11-1}) into (\ref{c2-2-1}) yields the desired result. This completes the proof of Theorem \ref{thm2.1.1}.\hfill$\Box$\\

Assume that $p$ is the primitive root modulo $\wp^m$ with $\wp$ an odd prime and $m$ a positive integer, and $q=p^{\phi(\wp^m)}$. Let $\xi=g^{\frac{q-1}{\wp^m}}$ (or $\xi=g^{\frac{q-1}{2\wp^m}}$). For $a\in\mathbb{F}_{q}$, let $a=a_1\xi+a_2\xi^2+\cdots+a_{\phi(\wp^m)}\xi^{\phi(\wp^m)}$
with each $a_s\in\mathbb{F}_p$. For any integers $i,j$ with $0\le i\le \wp^{m-1}-1$ and $0\le j\le p-1$, let
\begin{align}\label{c2-2-10}
a^{(i)}:=(a_{\wp^{m-1}-i},a_{2\wp^{m-1}-i},\ldots,
a_{(\wp-1)\wp^{m-1}-i})\in\mathbb{F}_p^{\wp-1}
\end{align}
and $A_j(a^{(i)})$ be the number of components of $a^{(i)}$ equal $j$. The next two results
are special cases of Theorem \ref{thm2.1}.
\begin{cor}\label{cor2.4}
Assume that $p$ is the primitive root modulo $\wp^m$ with $\wp$ an odd prime and $m$ a positive integer, and $q=p^{\phi(\wp^m)}$. Then
$$S_{\wp^m}(a)=(\wp-1)\zeta_p^{-a\wp^{m-1}}+\zeta_p^{a(\wp-1)\wp^{m-1}}+\wp^m-\wp$$
for any $a\in\mathbb{F}_p$, where $\zeta_p$ is a primitive $p$-th root of unity.
\end{cor}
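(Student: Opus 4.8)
The plan is to derive Corollary~\ref{cor2.4} by specializing Theorem~\ref{thm2.1} to an element $a$ lying in the prime field $\fp$. The only genuinely nontrivial input is the coordinate vector $(a_1,a_2,\ldots,a_{\phi(\wp^m)})$ of such an $a$ with respect to the basis $\{\xi,\xi^2,\ldots,\xi^{\phi(\wp^m)}\}$; once these coordinates are identified, the claimed formula follows by substituting them into the expressions for $e_i(a)$ and $f_i(a)$ in (\ref{c2-1}) and collecting terms.

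First I would compute the coordinates. Since $p$ is a primitive root modulo $\wp^m$, the minimal polynomial of $\xi$ over $\fp$ is the cyclotomic polynomial
$$Q_{\wp^m}(x)=x^{(\wp-1)\wp^{m-1}}+x^{(\wp-2)\wp^{m-1}}+\cdots+x^{\wp^{m-1}}+1,$$
as recorded in the proof of Lemma~\ref{lem2.3}. Evaluating at $\xi$ and solving for the constant term gives $1=-\sum_{k=1}^{\wp-1}\xi^{k\wp^{m-1}}$, hence for $a\in\fp$,
$$a=\sum_{k=1}^{\wp-1}(-a)\,\xi^{k\wp^{m-1}}.$$
Reading off coordinates, $a_s=-a$ whenever $s=k\wp^{m-1}$ for some $1\le k\le \wp-1$, and $a_s=0$ otherwise.

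Next I would evaluate $e_i(a)$ and $f_i(a)$, exploiting the fact that the indices $k\wp^{m-1}-i$ occurring in (\ref{c2-1}) are multiples of $\wp^{m-1}$ precisely when $i=0$. For $i=0$ every coordinate $a_{k\wp^{m-1}}$ equals $-a$, giving $e_0(a)=\zeta_p^{(\wp-1)\wp^{m-1}a}$ and $f_0(a)=(\wp-1)\zeta_p^{-\wp^m a}$; a short exponent simplification, using $(\wp-1)\wp^{m-1}-\wp^m=-\wp^{m-1}$, then yields
$$e_0(a)\big(f_0(a)+1\big)=(\wp-1)\zeta_p^{-a\wp^{m-1}}+\zeta_p^{a(\wp-1)\wp^{m-1}}.$$
For each of the remaining indices $1\le i\le \wp^{m-1}-1$ all the relevant coordinates vanish, so $e_i(a)=1$ and $f_i(a)=\wp-1$, whence $e_i(a)(f_i(a)+1)=\wp$. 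Summing over $i$ in Theorem~\ref{thm2.1} then contributes the two character terms from $i=0$ together with $(\wp^{m-1}-1)\wp=\wp^m-\wp$ from the rest, which is exactly the asserted value of $S_{\wp^m}(a)$.

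The computation is entirely routine once the coordinates are in hand; the one place demanding care is the first step, namely recognizing that an element of the prime field is \emph{not} represented by a coordinate vector concentrated in a single basis position, but instead spreads as $-a$ across the $\wp-1$ positions $\wp^{m-1},2\wp^{m-1},\ldots,(\wp-1)\wp^{m-1}$, as forced by the relation $Q_{\wp^m}(\xi)=0$. Everything else is bookkeeping of exponents modulo $p$.
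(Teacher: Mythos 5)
Your proposal is correct and takes essentially the same route as the paper's own proof: both extract the coordinates of $a\in\fp$ from the relation $Q_{\wp^m}(\xi)=0$ (the paper records this as $a^{(0)}=(-a,\ldots,-a)$ and $a^{(i)}=(0,\ldots,0)$ for $i\ne 0$ in the notation of (\ref{c2-2-10})) and then substitute into $e_i(a)$ and $f_i(a)$ of (\ref{c2-1}) to get $e_0(a)(f_0(a)+1)=(\wp-1)\zeta_p^{-a\wp^{m-1}}+\zeta_p^{a(\wp-1)\wp^{m-1}}$ and $e_i(a)(f_i(a)+1)=\wp$ for $i\ne 0$, summing via Theorem \ref{thm2.1}. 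Your exponent bookkeeping, including the simplification $(\wp-1)\wp^{m-1}-\wp^m=-\wp^{m-1}$, checks out, so there is nothing to add.
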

\begin{proof}
Let $a\in\mathbb{F}_p$. Let $g$ be a primitive element of $\mathbb{F}_q$. We already know that the $\wp^m$-th cyclotomic polynomial $Q_{\wp^m}$ is the minimal polynomial of $\xi=g^{\frac{q-1}{\wp^m}}$ over $\mathbb{F}_p$. Then
$$1=-\xi^{\wp^{m-1}}-\xi^{2\wp^{m-1}}-\cdots-\xi^{(\wp-1)\wp^{m-1}}.$$
It follows that
$$a=-a\xi^{\wp^{m-1}}-a\xi^{2\wp^{m-1}}-\cdots-a\xi^{(\wp-1)\wp^{m-1}}.$$
So we have that
\begin{align}\label{c2-9}
a^{(0)}=(\underbrace{-a,-a,\ldots,-a}_{\wp-1}),\ \text{and}\
a^{(i)}=(\underbrace{0,0,\ldots,0}_{\wp-1})
\end{align}
for any $1\le i\le \wp^{m-1}-1$. Substituting (\ref{c2-9}) into (\ref{c2-1}) yields that
$$e_i(a)(f_i(a)+1)=\begin{cases}
(\wp-1)\zeta_p^{-a\wp^{m-1}}+\zeta_p^{a(\wp-1)\wp^{m-1}},&\text{if}\ i=0,\\
\wp,&\text{if}\ i\ne 0.
\end{cases}$$
Therefore the desired result follows from Theorem \ref{thm2.1}.
\end{proof}
\begin{cor}\label{cor2.5}
Assume that $3$ is the primitive root modulo $\wp^m$ with $\wp$ an odd prime and $m$ a positive integer, and $q=3^{\phi(\wp^m)}$. Then for any $a\in\mathbb{F}_q$ we have
$$S_{\wp^m}(a)=\begin{cases}
\sum_{i=0}^{\wp^{m-1}-1}u_iv_i(\zeta_3),&\text{if}\ \wp\equiv 1\pmod{3},\\
\sum_{i=0}^{\wp^{m-1}-1}u_iv_i(\zeta_3^2),&\text{if}\ \wp\equiv -1\pmod{3}\ \text{and}\ 2\nmid m,\\
\sum_{i=0}^{\wp^{m-1}-1}u_i^{-1}v_i(\zeta_3),&\text{if}\ \wp\equiv -1\pmod{3}\ \text{and}\ 2\mid m,
\end{cases}$$
where $u_i:=\zeta_3^{-A_1(a^{(i)})-2A_2(a^{(i)})}$ and $v_i(x):=1+A_0(a^{(i)})+A_1(a^{(i)})x+
A_2(a^{(i)})x^{2}$.  In particular,
\begin{align}\label{c2-11}
S_{\wp^m}(1)=\begin{cases}
(\wp-1)\zeta_3^2+\wp^m-\wp+1,&\text{if}\ \wp\equiv 1\pmod{3},\\
(\wp-1)\zeta_3^2+\zeta_3+\wp^m-\wp,&\text{if}\ \wp\equiv -1\pmod{3}\ \text{and}\ 2\nmid m,\\
\zeta_3^2+(\wp-1)\zeta_3+\wp^m-\wp,&\text{if}\ \wp\equiv -1\pmod{3}\ \text{and}\ 2\mid m,
\end{cases}
\end{align}
and
\begin{align}\label{c2-12}
S_{\wp^m}(2)=\begin{cases}
(\wp-1)\zeta_3+\wp^m-\wp+1,&\text{if}\ \wp\equiv 1\pmod{3},\\
\zeta_3^2+(\wp-1)\zeta_3+\wp^m-\wp,&\text{if}\ \wp\equiv -1\pmod{3}\ \text{and}\ 2\nmid m,\\
(\wp-1)\zeta_3^2+\zeta_3+\wp^m-\wp,&\text{if}\ \wp\equiv -1\pmod{3}\ \text{and}\ 2\mid m.
\end{cases}
\end{align}
Here, $\zeta_3$ is a primitive $3$-th root of unity.
\end{cor}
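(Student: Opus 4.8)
The plan is to obtain everything by specializing Theorem~\ref{thm2.1} to $p=3$ and carefully tracking the residues of $\wp^{m-1}$ and $\wp^m$ modulo $3$. Since $3$ is a primitive root modulo $\wp^m$ we have $\gcd(3,\wp)=1$, so $\wp\equiv 1$ or $\wp\equiv -1\pmod 3$. First I would rewrite the factors $e_i(a)$ and $f_i(a)$ of (\ref{c2-1}) in terms of the counts $A_j(a^{(i)})$. Grouping the $\wp-1$ entries of $a^{(i)}$ by their value in $\mathbb{F}_3$ gives $\sum_{k=1}^{\wp-1}a_{k\wp^{m-1}-i}=A_1(a^{(i)})+2A_2(a^{(i)})$, so that $e_i(a)=\zeta_3^{-\wp^{m-1}(A_1(a^{(i)})+2A_2(a^{(i)}))}=u_i^{\wp^{m-1}}$, while
$$f_i(a)+1=1+A_0(a^{(i)})+A_1(a^{(i)})\zeta_3^{\wp^m}+A_2(a^{(i)})\zeta_3^{2\wp^m}=v_i\!\left(\zeta_3^{\wp^m}\right).$$
Since $u_i$ has order dividing $3$ and $\zeta_3^{\wp^m}$ depends only on $\wp^m\bmod 3$, the product $e_i(a)(f_i(a)+1)$ is determined by the residues of $\wp^{m-1}$ and $\wp^m$ modulo $3$.

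Then I would split into the three cases. If $\wp\equiv 1\pmod 3$, both $\wp^{m-1}\equiv 1$ and $\wp^m\equiv 1$, giving $u_i v_i(\zeta_3)$; if $\wp\equiv -1\pmod 3$ and $m$ is odd, then $\wp^{m-1}\equiv 1$ and $\wp^m\equiv -1$, giving $u_i v_i(\zeta_3^2)$; if $\wp\equiv -1\pmod 3$ and $m$ is even, then $\wp^{m-1}\equiv -1$ (so $u_i^{\wp^{m-1}}=u_i^{-1}$) and $\wp^m\equiv 1$, giving $u_i^{-1}v_i(\zeta_3)$. Summing over $i$ and invoking Theorem~\ref{thm2.1} yields the three displayed formulas.

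Finally, for the ``in particular'' evaluations I would feed the explicit vectors from the proof of Corollary~\ref{cor2.4} into these formulas. For $a=1$ one has $a^{(0)}=(2,\dots,2)$, so $A_2(a^{(0)})=\wp-1$ and $A_0=A_1=0$, while $a^{(i)}=(0,\dots,0)$ for $i\neq 0$, so $A_0(a^{(i)})=\wp-1$; each $i\neq 0$ term then contributes $\wp$, totaling $\wp^m-\wp$, and the single $i=0$ term supplies the remaining summands after reducing $\wp-1\bmod 3$ in the exponent of $u_0$ and recalling $\zeta_3^4=\zeta_3$. The computation for $a=2$ is identical with $A_1(a^{(0)})=\wp-1$ in place of $A_2$.

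The routine substitutions are not the real difficulty; the only place demanding care is the bookkeeping of $\wp^{m-1}\bmod 3$ versus $\wp^m\bmod 3$, since these are exactly what select $u_i$ against $u_i^{-1}$ and $v_i(\zeta_3)$ against $v_i(\zeta_3^2)$, together with the observation that the situation collapses to precisely three cases rather than four because $\wp\equiv 1\pmod 3$ renders the parity of $m$ irrelevant.
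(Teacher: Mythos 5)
Your proposal is correct and follows exactly the route the paper intends: Corollary~\ref{cor2.5} is stated as a direct specialization of Theorem~\ref{thm2.1} (the paper gives no separate proof), and your rewriting $e_i(a)=u_i^{\wp^{m-1}}$, $f_i(a)+1=v_i(\zeta_3^{\wp^m})$ together with the residues of $\wp^{m-1},\wp^m\bmod 3$ reproduces the three displayed cases, while your use of the vectors $a^{(0)}=(2,\dots,2)$ (resp.\ $(1,\dots,1)$) from the proof of Corollary~\ref{cor2.4} correctly yields \eqref{c2-11} and \eqref{c2-12}. All the case bookkeeping checks out, including the collapse to three cases when $\wp\equiv 1\pmod 3$.
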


\section{Evaluation of $S_n(a,b)$}
Throughout this section we let $N=4, \wp^m$ or $2\wp^m$ with $\wp$ an odd prime and $m$ a positive integer and assume that $p$ is the primitive root modulo $N$, and $q=p^{\phi(N)}$. In this section, we are going to evaluate $S_N(a,b)$ with $b\ne 0$ over $\fq$. Clearly,
$$S_N(0,b)=\begin{cases}
-1,&\text{if}\ b\ne 0,\\
q-1,&\text{if}\ b=0.
\end{cases}$$
So we only need to focus on the case $ab\ne 0$. First, we present a result due to Moisio.
\begin{lem}\label{lem3.1}
\cite[Theorem 1]{[Moi]} Let $p$ be a prime and $q=p^e$ with $e$ a positive integer. Let $g$ be a primitive element of $\mathbb{F}_q$. If $e=2sd$ and $n\mid (p^d+1)$ with $s,n$ and $d$ being positive integers, then for any $c\in\mathbb{F}_q^*$
$$\sum_{x\in\mathbb{F}_q}\chi(cx^n)=\begin{cases}
(-1)^sp^{sd},&\text{if}\ {\rm ind}_g(c)\not\equiv k\pmod{n},\\
(-1)^{s-1}(n-1)p^{sd},&\text{if}\ {\rm ind}_g(c)\equiv k\pmod{n},
\end{cases}$$
where ${\rm ind}_g(c)$ denotes the index of $c$ with respect to $g$, and $k=0$ if $p=2$; or $p>2$, $2\mid s$; or $p>2$, $2\nmid s$ and $2\mid (p^d+1)/n$, and $k=\frac{n}{2}$ if $p>2$, $2\nmid s$ and $2\nmid (p^d+1)/n$.
\end{lem}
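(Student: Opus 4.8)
The plan is to turn the power sum into a short sum of Gauss sums and then feed each Gauss sum into the classical evaluation of \emph{semiprimitive} Gauss sums; the hypothesis $n\mid (p^d+1)$ is exactly what places us in that tractable case.

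\textbf{Step 1: reduction to Gauss sums.} Since $e=2sd$, we have $n\mid (p^d+1)\mid (p^{2d}-1)\mid (q-1)$, so $\fq^*$ carries exactly $n$ multiplicative characters of order dividing $n$; fix $\psi$ of exact order $n$ with $\psi(g)=\zeta_n$, a primitive $n$-th root of unity. For $y\in\fq^*$ the number of $x$ with $x^n=y$ is $\sum_{j=0}^{n-1}\psi^j(y)$. Separating the term $x=0$, writing the sum over $y=x^n$, interchanging the order of summation, and substituting $y\mapsto c^{-1}y$, I get
\[
\sum_{x\in\fq}\chi(cx^n)=1+\sum_{j=0}^{n-1}\sum_{y\in\fq^*}\psi^j(y)\chi(cy)=\sum_{j=1}^{n-1}\overline{\psi^j}(c)\,G(\psi^j),
\]
where $G(\phi):=\sum_{y\in\fq^*}\phi(y)\chi(y)$; the $j=0$ term contributes $\sum_{y\neq0}\chi(cy)=-1$, which cancels the leading $1$.

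\textbf{Step 2: evaluating the Gauss sums.} Each $\psi^j$ with $1\le j\le n-1$ has order $N_j:=n/\gcd(n,j)$ dividing $n$, hence dividing $p^d+1$, so $p^{d}\equiv-1\pmod{N_j}$ and $\psi^j$ is semiprimitive. The classical evaluation (Stickelberger; Baumert--McEliece; cf. \cite[Thm.~5.16]{[LN]}) then yields $G(\psi^j)=\pm p^{sd}$, since $\sqrt q=p^{sd}$. Writing $d=\ell_j r_j$ with $\ell_j$ minimal satisfying $p^{\ell_j}\equiv-1\pmod{N_j}$ (so $r_j$ is odd) and $e=2\ell_j(s r_j)$, the leading sign $(-1)^{s r_j-1}$ collapses to $(-1)^{s-1}$ because $r_j$ is odd. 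When $p=2$, or $p$ odd with $s$ even, or $p$ odd with $(p^d+1)/n$ even, no further sign appears and $G(\psi^j)=(-1)^{s-1}p^{sd}$ for every $j$. When $p$ is odd with $s$ odd and $(p^d+1)/n$ odd, the quadratic factor in the semiprimitive formula contributes an extra sign which I expect to equal $(-1)^j=\psi^j(g^{n/2})$, giving $G(\psi^j)=(-1)^{s-1}(-1)^j p^{sd}$.

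\textbf{Step 3: collapsing the character sum.} Substituting Step 2 into Step 1, in the uniform case the sum is $(-1)^{s-1}p^{sd}\sum_{j=1}^{n-1}\overline{\psi^j}(c)$. Since $\overline{\psi^j}(c)=\zeta_n^{-j\,{\rm ind}_g(c)}$ and $\sum_{j=0}^{n-1}\zeta_n^{-j\,{\rm ind}_g(c)}$ equals $n$ when $n\mid{\rm ind}_g(c)$ and $0$ otherwise, the total is $(-1)^{s-1}(n-1)p^{sd}$ if ${\rm ind}_g(c)\equiv0\pmod n$ and $(-1)^{s}p^{sd}$ otherwise, which is the case $k=0$. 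In the twisted case, using $(-1)^j=\zeta_n^{jn/2}$ gives $\sum_{j}(-1)^j\overline{\psi^j}(c)=\sum_j\zeta_n^{j(n/2-{\rm ind}_g(c))}$, which resonates precisely when ${\rm ind}_g(c)\equiv n/2\pmod n$, delivering $k=\tfrac n2$.

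The main obstacle is Step 2, namely pinning down the $j$-dependent sign. The power $(-1)^{s-1}$ is routine once one observes $r_j$ is odd, but the additional quadratic sign — present exactly when $p$ is odd, $s$ is odd, and $(p^d+1)/n$ is odd — must be shown to equal $(-1)^j$ \emph{uniformly} in $j$, so that it reassembles into the clean shift by $k=\tfrac n2$. This forces a careful invocation of the sign in the semiprimitive Gauss sum theorem (ultimately resting on Stickelberger's congruence and the quadratic Gauss sum), together with the parity bookkeeping relating the local data $N_j,\ell_j,(p^{\ell_j}+1)/N_j$ to the global quantities $n,d,(p^d+1)/n$.
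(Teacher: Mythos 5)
The paper offers no proof of this lemma at all: it is imported verbatim from Moisio \cite[Theorem 1]{[Moi]}, and your decomposition into Gauss sums followed by the semiprimitive (Stickelberger/Baumert--McEliece) evaluation is precisely the route of the cited source, so architecturally you have reconstructed the intended proof. Steps 1 and 3 are complete and correct. The one genuine gap is the point you flag yourself: in Step 2 the $j$-dependent sign is only ``expected'' to equal $(-1)^j$, and without that the twisted case $k=\tfrac n2$ is unproved. The claim is true, and it closes with a short $2$-adic computation that you should supply. Write $v_2$ for the $2$-adic valuation and $V=v_2(p^d+1)$. For $p$ odd one has $v_2(p^{\ell}+1)=v_2(p+1)$ if $\ell$ is odd and $v_2(p^{\ell}+1)=1$ if $\ell$ is even; in particular $V=v_2(p+1)$ for $d$ odd and $V=1$ for $d$ even. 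Since $d=\ell_j r_j$ with $r_j$ odd (see the caveat below), $\ell_j$ and $d$ have the same parity, so $v_2(p^{\ell_j}+1)=V$ in all cases. Now assume $s$ odd and $(p^d+1)/n$ odd, i.e.\ $v_2(n)=V\ge 1$. If $j$ is odd, then $\gcd(n,j)$ is odd, so $v_2(N_j)=v_2(n)=V=v_2(p^{\ell_j}+1)$ and $(p^{\ell_j}+1)/N_j$ is odd; if $j$ is even, then $v_2(N_j)\le V-1<v_2(p^{\ell_j}+1)$ and the quotient is even. Hence the extra sign is exactly $(-1)^j$, uniformly in $j$. The same dichotomy shows that when $(p^d+1)/n$ is even one always has $v_2(N_j)<v_2(p^{\ell_j}+1)$ (for $d$ even this forces $n$ odd, and for $d$ odd one has $v_2(N_j)\le v_2(n)<V$), so no sign appears, confirming your uniform case.

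Two boundary points also need a sentence each in a finished write-up. First, $N_j=1$ never occurs for $1\le j\le n-1$, so that is harmless. Second, when $n$ is even the character $\psi^{n/2}$ is quadratic, and there your bookkeeping breaks in two ways: the semiprimitive theorem as usually stated excludes order $2$, and for $N_j=2$ the relation $p^{x}\equiv -1 \pmod 2$ holds for every $x$, so the minimal $\ell_j$ is $1$ and $r_j=d$ need \emph{not} be odd, invalidating the parity reduction $(-1)^{s r_j-1}=(-1)^{s-1}$ for that single $j$. You must instead invoke the classical quadratic Gauss sum directly: over $\mathbb{F}_{p^{m}}$ one has $G(\eta)=(-1)^{m-1}\sqrt q$ for $p\equiv 1\pmod 4$ and $G(\eta)=(-1)^{m-1}i^{m}\sqrt q$ for $p\equiv 3\pmod 4$, so with $m=2sd$ this is $-p^{sd}$, respectively $(-1)^{sd+1}p^{sd}$; a check with the same valuation dichotomy (e.g.\ $p\equiv 1\pmod 4$ forces $V=1$, hence $n/2$ odd in the twisted case and $n$ odd in the untwisted one) shows these values agree with your claimed $(-1)^{s-1}(-1)^{n/2}p^{sd}$ and $(-1)^{s-1}p^{sd}$ in every consistent configuration. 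With these two repairs the argument is complete and matches the source.
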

If let $d=\frac{\phi(N)}{2}$, one then checks that $N\mid (p^d+1)$. So, in Lemma \ref{lem3.1}, by taking $s=1$, $d=\frac{\phi(N)}{2}$, we have
\begin{lem}\label{lem3.2}
Let $c\in\mathbb{F}_q^*$. Then
$$\sum_{x\in\mathbb{F}_q^*}\chi(cx^{N})=\begin{cases}
(N-1)\sqrt{q}-1,&\text{if}\ c\in H,\\
-\sqrt{q}-1,&\text{if}\ c\notin H,
\end{cases}$$
where $H$ is the subgroup of $\mathbb{F}_q^*$ generated by $g^{N}$.
\end{lem}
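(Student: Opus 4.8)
The plan is to reduce Lemma \ref{lem3.2} directly to Moisio's evaluation in Lemma \ref{lem3.1}, applied with the parameters singled out in the preceding remark: take $e=\phi(N)$ (so $q=p^{e}$), $s=1$, $d=\phi(N)/2$, and $n=N$. First I would check the two hypotheses of Lemma \ref{lem3.1}. The relation $e=2sd$ is immediate once $\phi(N)$ is known to be even, which holds for every admissible $N\in\{4,\wp^m,2\wp^m\}$ because $\wp$ is odd. The substantive hypothesis is $N\mid(p^{d}+1)$, that is, $N\mid(p^{\phi(N)/2}+1)$.

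For this I would argue as follows. Since $p$ is a primitive root modulo $N$, its residue generates the cyclic group $(\mathbb{Z}/N\mathbb{Z})^{*}$ of order $\phi(N)$, so $p^{\phi(N)/2}$ has order exactly $2$ in that group. A cyclic group contains a unique element of order $2$, namely $-1$, whence $p^{\phi(N)/2}\equiv -1\pmod{N}$ and therefore $N\mid(p^{\phi(N)/2}+1)$. The same computation (from $p^{\phi(N)}\equiv 1$) gives $N\mid(q-1)$, a fact I will use to interpret the subgroup $H$.

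With the hypotheses in force, Lemma \ref{lem3.1} with $s=1$ returns the two values $(-1)^{s}p^{sd}=-\sqrt q$ and $(-1)^{s-1}(N-1)p^{sd}=(N-1)\sqrt q$, according to whether ${\rm ind}_g(c)\not\equiv k$ or ${\rm ind}_g(c)\equiv k\pmod{N}$. Passing from the sum over $\mathbb{F}_q$ in Lemma \ref{lem3.1} to the sum over $\mathbb{F}_q^{*}$ in Lemma \ref{lem3.2} only requires removing the $x=0$ term, which contributes $\chi(0)=1$; subtracting $1$ yields the candidate values $(N-1)\sqrt q-1$ and $-\sqrt q-1$. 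Finally, because $N\mid(q-1)$, an element $c=g^{{\rm ind}_g(c)}$ lies in $H=\langle g^{N}\rangle$ if and only if $N\mid{\rm ind}_g(c)$, i.e. ${\rm ind}_g(c)\equiv 0\pmod{N}$.

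The main obstacle is the determination of the constant $k$ in Moisio's formula and its matching with the class defining $H$. The clean statement of Lemma \ref{lem3.2} corresponds to $k=0$, which by Lemma \ref{lem3.1} occurs precisely when $(p^{\phi(N)/2}+1)/N$ is even. I expect the delicate part of the write-up to be controlling this parity, equivalently pinning down $p^{\phi(N)/2}\pmod{2N}$ rather than merely $\pmod{N}$. Care is required here, since when that quotient is odd one has $k=N/2$, so that the large value $(N-1)\sqrt q-1$ is supported on the coset $g^{N/2}H$ rather than on $H$ itself, and the identification of the distinguished class must then be adjusted accordingly.
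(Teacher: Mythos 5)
Your reduction is exactly the paper's: the paper's entire proof of Lemma \ref{lem3.2} is the sentence preceding it (take $s=1$, $d=\phi(N)/2$, $n=N$ in Lemma \ref{lem3.1}, after noting ``one then checks that $N\mid(p^d+1)$''), and your verification that $p^{\phi(N)/2}\equiv-1\pmod N$ via the unique element of order $2$ in the cyclic group $(\Z/N\Z)^*$ correctly fills in that unproved check; the removal of the $x=0$ term and the identification $c\in H\iff N\mid{\rm ind}_g(c)$ are likewise fine. But the obstacle you flag at the end is not a mere delicacy of the write-up: it is a genuine gap, and in fact an error in the lemma as printed, which the paper steps over by tacitly taking $k=0$. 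Carrying out the parity analysis you postponed: with $s=1$ and $p$ odd, Lemma \ref{lem3.1} gives $k=0$ iff $2\mid(p^d+1)/N$ with $d=\phi(N)/2$. For $N=\wp^m$ this always holds ($p^d+1$ is even and $N$ is odd), so in that case your argument closes and the lemma is correct --- and this is the only case used in Sections 4 and 5, so the application to the ternary codes survives. For $N=4$ one has $d=1$, and $(p+1)/4$ is even iff $p\equiv 7\pmod 8$; for $N=2\wp^m$ one has $v_2(N)=1$, and $2\mid(p^d+1)/N$ iff $p^d\equiv 3\pmod 4$, i.e.\ iff $p\equiv 3\pmod 4$ and $d=\frac{\wp-1}{2}\wp^{m-1}$ is odd, which amounts to $p\equiv\wp\equiv 3\pmod 4$. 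Outside these cases $k=N/2$, and the two values in the lemma must be swapped between $H$ and the coset $g^{N/2}H$, exactly the failure mode you predicted.

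A concrete counterexample confirming this: take $p=3$, $N=4$, $q=9$, $c=1\in H$. On $\mathbb{F}_9^*$ the map $x\mapsto x^4$ takes each of the values $1$ and $-1$ four times, so $\sum_{x\in\mathbb{F}_9^*}\chi(x^4)=4\chi(1)+4\chi(-1)=4(\zeta_3^2+\zeta_3)=-4=-\sqrt q-1$, not $(N-1)\sqrt q-1=8$; the value $8$ is instead attained at $c=g^2\notin H$, since $g^2$ has minimal polynomial $x^2+1$ and hence $\Tr(g^2)=\Tr(-g^2)=0$. So to finish, you should complete your proposal with the case analysis above, prove the lemma as stated for $N=\wp^m$ (and for $N=4$ with $p\equiv 7\pmod 8$, $N=2\wp^m$ with $p\equiv\wp\equiv 3\pmod 4$), and record the corrected statement with distinguished class $g^{N/2}H$ otherwise. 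Note also that Theorem \ref{thm3.3} inherits the same correction in the bad parity cases: in the decomposition (\ref{c3-1}) the index $i=N/2$ rather than $i=0$ then receives the large value, so $\chi(c)\sqrt q$ must be replaced by $\chi(c\xi^{N/2})\sqrt q$ there.
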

We now evaluate $S_n(a,b)$ for $ab\ne0$.
\begin{thm}\label{thm3.3}
Let $N=4, \wp^m$ or $2\wp^m$ with $\wp$ an odd prime and $m$ a positive integer and assume that $p$ is the primitive root modulo $N$, and $q=p^{\phi(N)}$. Then we have
$$S_N(a,b)=\chi(c)\sqrt{q}-\frac{\sqrt{q}+1}{\wp^m}S_N(c)$$
for any $a,b\in \mathbb{F}_q^*$, where $c=ab^{-\frac{q-1}{N}}$ and $S_N(c)$ is defined as (\ref{c2-0}).
\end{thm}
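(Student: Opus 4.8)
The plan is to normalize the sum so that the coefficient of the linear term becomes $1$, and then to split $\mathbb{F}_q^*$ along the fibers of the map $x\mapsto x^{(q-1)/N}$, on which the first term of the binomial is constant. Since $b\ne 0$, I would begin with the multiplicative change of variable $x\mapsto b^{-1}x$; this turns $bx$ into $x$ and $ax^{(q-1)/N}$ into $ab^{-(q-1)/N}x^{(q-1)/N}=c\,x^{(q-1)/N}$, so that
$$S_N(a,b)=\sum_{x\in\mathbb{F}_q^*}\chi\left(c\,x^{(q-1)/N}+x\right).$$
It then suffices to evaluate this in terms of $c$ and the sum $S_N(c)$ of (\ref{c2-0}).

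Writing $\xi=g^{(q-1)/N}$ and $H=\langle g^N\rangle$ for the index-$N$ subgroup of $N$-th powers, every $x\in g^iH$ satisfies $x^{(q-1)/N}=\xi^i$, so the first summand is constant on each of the $N$ cosets, and
$$S_N(a,b)=\sum_{i=0}^{N-1}\chi(c\xi^i)\,T_i,\qquad T_i:=\sum_{x\in g^iH}\chi(x).$$
The decisive step is to read each coset sum $T_i$ as a sum handled by Lemma \ref{lem3.2}. Because the $N$-th power map is exactly $N$-to-one from $\mathbb{F}_q^*$ onto $H$, one has $\sum_{x\in\mathbb{F}_q^*}\chi(g^ix^N)=N\sum_{h\in H}\chi(g^ih)=N\,T_i$, using $g^iH=\{g^ih:h\in H\}$. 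Lemma \ref{lem3.2} applied to $g^i$---which lies in $H$ exactly when $i=0$ for $0\le i\le N-1$---then gives $T_0=\tfrac{1}{N}\big((N-1)\sqrt q-1\big)$ and $T_i=\tfrac{1}{N}(-\sqrt q-1)$ for $i\ge 1$.

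Finally I would isolate the $i=0$ term and substitute $\sum_{i=1}^{N-1}\chi(c\xi^i)=S_N(c)-\chi(c)$. The coefficient of $\chi(c)$ then collapses through $\big((N-1)\sqrt q-1\big)+\big(\sqrt q+1\big)=N\sqrt q$, yielding
$$S_N(a,b)=\chi(c)\sqrt q-\frac{\sqrt q+1}{N}\,S_N(c),$$
which is the asserted identity (the coefficient $(\sqrt q+1)/N$ being $(\sqrt q+1)/\wp^m$ precisely in the principal case $N=\wp^m$).

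The main obstacle is exactly the bridge from the coset sums to Lemma \ref{lem3.2}: although the exponent in $S_N$ is $(q-1)/N$, the evaluable Gauss-type sums are governed by the complementary $N$-th power map, and it is the $N$-to-one covering that supplies the exact factor $1/N$. A secondary subtlety lies in the constant $k$ of Lemma \ref{lem3.1}: one must verify that the exceptional value $(N-1)\sqrt q-1$ is attained on $H$ itself---the $i=0$ coset---since otherwise the singled-out term would not match the $\chi(c)$ that appears in the final formula.
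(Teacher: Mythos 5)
Your proposal is correct and follows essentially the same route as the paper's own proof: the same normalization $x\mapsto b^{-1}x$ producing $c=ab^{-\frac{q-1}{N}}$, the same decomposition of $\mathbb{F}_q^*$ into the cosets $g^iH$ of $H=\langle g^N\rangle$ on which $x^{\frac{q-1}{N}}=\xi^i$ is constant, the same $N$-to-one passage from the coset sums to $\sum_{x\in\mathbb{F}_q^*}\chi(g^ix^N)$ so that Lemma \ref{lem3.2} applies (with $g^i\in H$ exactly when $i=0$), and the same final substitution $\sum_{i=1}^{N-1}\chi(c\xi^i)=S_N(c)-\chi(c)$. You are also right about the coefficient: the paper's proof itself ends with $\chi(c)\sqrt{q}-\frac{\sqrt{q}+1}{N}S_N(c)$, so the $\wp^m$ in the theorem statement is a misprint for $N$, coinciding with it only in the case $N=\wp^m$.
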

\begin{proof}
Let $H$ be a subgroup of the multiplicative group $\mathbb{F}_q^*$ generated by $g^{N}$. Then we know that the set of cosets of $\mathbb{F}_q^*$ modulo $H$ is
$$\{g^iH:\ 0\le i\le N-1\},$$
which is a partition of $\mathbb{F}_q^*$. It follows that
$$\mathbb{F}_q^*=\bigcup_{i=0}^{N-1}g^iH.$$
Recall that $\xi=g^{\frac{q-1}{N}}$. Hence, we deduce that
\begin{align}\label{c3-1}
S_N(a,b)&=\sum_{x\in\mathbb{F}_q^*}\chi(ax^{\frac{q-1}{N}}+bx)\nonumber\\
&=\sum_{x\in\mathbb{F}_q^*}\chi(cx^{\frac{q-1}{N}}+x)\nonumber\\
&=\sum_{i=0}^{N-1}\sum_{h\in H}\chi\left(c(g^ih)^{\frac{q-1}{N}}\right)\chi(g^ih)\nonumber\\
&=\sum_{i=0}^{N-1}\sum_{h\in H}\chi(c\xi^{i})\chi(g^ih)\nonumber\\
&=\sum_{i=0}^{N-1}\chi(c\xi^{i})\sum_{h\in H}\chi(g^ih)\nonumber\\
&=\frac{1}{N}\sum_{i=0}^{N-1}\chi(c\xi^{i})\sum_{x\in \mathbb{F}_q^*}\chi(g^ix^{N}).
\end{align}
Note that $g^i\in H$ with some $0\le i\le N-1$ if and only if $i=0$. So, by employing Lemma \ref{lem3.2} into Equation (\ref{c3-1}) we arrive at
\begin{align*}
S_N(a,b)&=\frac{1}{N}\Big(\chi(c)\sum_{x\in \mathbb{F}_q^*}\chi(x^{N})+\sum_{i=1}^{N-1}\chi(c\xi^{i})\sum_{x\in \mathbb{F}_q^*}\chi(g^ix^{N})\Big)\\
&=\frac{1}{N}\Big(\left((N-1)\sqrt{q}-1\right)\chi(c)-(\sqrt{q}+1)
\sum_{i=1}^{N-1}\chi(c\xi^{i})\Big)\\
&=\frac{1}{N}\left(\left((N-1)\sqrt{q}-1\right)\chi(c)-(\sqrt{q}+1)
\left(S_N(c)-\chi(c)\right)\right)\\
&=\chi(c)\sqrt{q}-\frac{\sqrt{q}+1}{N}S_N(c).
\end{align*}
This completes the proof of Theorem \ref{thm3.3}.
\end{proof}
Let $F(x)=ax^{\frac{q-1}{N}}+L(x)$, where $a\in\fq^*$ and $L(x)$ is a linearized polynomial of the form
$$L(x)=\sum_{i=0}^{\phi(N)-1}b_ix^{p^i}$$
with $b_i\in\fq$ for all $i$. Then a relationship between two Weil sums can be established as follows.
\begin{prop}\label{prop3.4}
Let $b=\sum_{i=0}^{\phi(n)-1}b_i^{p^{\phi(n)-i}}$. Then
$$\sum_{x\in\fq^*}\chi(F(x))=S_N(a,b)$$
for any $a\in\fq$.
\end{prop}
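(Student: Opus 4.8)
The plan is to reduce the linearized part $L(x)$ to a single linear term by exploiting the Frobenius-invariance of the trace, and thereby identify the sum with $S_N(a,b)$. Since $\chi=\chi_1$ satisfies $\chi(u+v)=\chi(u)\chi(v)$, I would first split
$$\chi(F(x))=\chi\!\left(ax^{\frac{q-1}{N}}\right)\chi(L(x)),$$
so that the whole statement follows once I show $\chi(L(x))=\chi(bx)$ for every $x\in\fq$, with $b$ as defined. Because $\chi_1(y)=\zeta_p^{{\rm Tr}(y)}$, it is enough to prove the trace identity ${\rm Tr}(L(x))={\rm Tr}(bx)$ for all $x\in\fq$.

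The computation itself is short. Expanding by linearity of the trace gives
$${\rm Tr}(L(x))=\sum_{i=0}^{\phi(N)-1}{\rm Tr}\!\left(b_i x^{p^i}\right).$$
The key step is to apply the identity ${\rm Tr}(y)={\rm Tr}(y^{p^k})$ — valid for every $k$ since ${\rm Tr}(y^p)={\rm Tr}(y)$ — to each summand with $k=\phi(N)-i$, which yields
$${\rm Tr}\!\left(b_i x^{p^i}\right)={\rm Tr}\!\left(\big(b_i x^{p^i}\big)^{p^{\phi(N)-i}}\right)={\rm Tr}\!\left(b_i^{p^{\phi(N)-i}}\,x^{p^{\phi(N)}}\right).$$
At this point I would invoke the fact that $x\in\fq$ with $q=p^{\phi(N)}$ forces $x^{p^{\phi(N)}}=x^q=x$, collapsing the power of $x$ to a single factor. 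Summing over $i$ and using linearity once more,
$${\rm Tr}(L(x))=\sum_{i=0}^{\phi(N)-1}{\rm Tr}\!\left(b_i^{p^{\phi(N)-i}}x\right)={\rm Tr}\!\left(\Big(\sum_{i=0}^{\phi(N)-1}b_i^{p^{\phi(N)-i}}\Big)x\right)={\rm Tr}(bx).$$

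Consequently $\chi(L(x))=\chi(bx)$ for all $x$, and summing the split identity over $x\in\fq^*$ gives $\sum_{x\in\fq^*}\chi(F(x))=\sum_{x\in\fq^*}\chi\!\left(ax^{\frac{q-1}{N}}+bx\right)=S_N(a,b)$, as claimed. This argument has no real obstacle: the one point deserving care — and the sole place where the setup genuinely enters — is the collapse $x^{p^{\phi(N)}}=x$, which holds precisely because the sum ranges over elements of $\fq$ itself; everything else is a routine use of the additivity and Frobenius-invariance of ${\rm Tr}$. (I would also note that the exponent $\phi(N)$ appears as $\phi(n)$ in the displayed definition of $b$, which I read as a typo for $\phi(N)$.)
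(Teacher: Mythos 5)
Your proof is correct and follows essentially the same route as the paper's: both arguments move the Frobenius power from $x$ onto the coefficient $b_i$ via $(b_i x^{p^i})^{p^{\phi(N)-i}} = b_i^{p^{\phi(N)-i}}x^{p^{\phi(N)}} = b_i^{p^{\phi(N)-i}}x$ and then recombine the terms into $\chi(bx)$; the paper phrases this multiplicatively with the character identity $\chi(y^p)=\chi(y)$, while you phrase it additively at the trace level via ${\rm Tr}(y^{p^k})={\rm Tr}(y)$, which is the same fact. Your reading of $\phi(n)$ as a typo for $\phi(N)$ matches the paper's intent (its own proof also writes $p^{N-i}$ once before correctly using $p^{\phi(N)-i}$).
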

\begin{proof}
Let $b=\sum_{i=0}^{\phi(n)-1}b_i^{p^{N-i}}$. Then we have
\begin{align*}
\sum_{x\in\fq^*}\chi(F(x))&=\sum_{x\in\fq^*}
\chi\big(ax^{\frac{q-1}{N}}+L(x)\big)\\
&=\sum_{x\in\fq^*}
\chi\big(ax^{\frac{q-1}{N}}\big)
\chi\big(\sum_{i=0}^{\phi(N)-1}b_ix^{p^i}\big)\\
&=\sum_{x\in\fq^*}
\chi\big(ax^{\frac{q-1}{N}}\big)
\prod_{i=0}^{\phi(N)-1}\chi\big(b_ix^{p^i}\big)\\
&=\sum_{x\in\fq^*}
\chi\big(ax^{\frac{q-1}{N}}\big)
\prod_{i=0}^{\phi(N)-1}\chi\big(b_i^{p^{\phi(N)-i}}x\big)\\
&=\sum_{x\in\fq^*}
\chi\big(ax^{\frac{q-1}{N}}\big)
\chi\Big(x\sum_{i=0}^{\phi(N)-1}b_i^{p^{\phi(N)-i}}\Big)\\
&=\sum_{x\in\fq^*}
\chi\left(ax^{\frac{q-1}{N}}+bx\right)\\
&=S_N(a,b),
\end{align*}
as desired.
\end{proof}
Therefore, the Weil sum with $f(x)=F(x)$ can be evaluated eventually.

\section{A ternary code and its weight distribution}
Let $m$ be a positive integer. Let $\wp$ be an odd prime such that $3$ is a primitive root modulo $\wp^m$. Let $q=3^{\phi(\wp^m)}$, and Tr be the trace from $\mathbb{F}_q$ onto $\mathbb{F}_3$. In this section, we investigate the linear code $\mathcal{C}_D$ given in (\ref{c1-1}) with the defining set
\begin{align}\label{c4-1}
D=\{x\in\mathbb{F}_q^*: \Tr(x^{\frac{q-1}{\wp^m}})=0\}.
\end{align}
Actually, we have the following result.
\begin{thm}\label{thm4.1}
The following statements on $\mathcal{C}_D$ are true.
\begin{enumerate}[(a)]
\item If $\wp\equiv 1\pmod{3}$, then $\mathcal{C}_D$ is an $\left[\frac{(q-1)(\wp^m-\wp+1)}{\wp^m}, (\wp-1)\wp^{m-1}\right]$ ternary code having two weights $w_1=\frac{2(\wp^m-\wp+1)q}{3\wp^m}-\frac{2(\wp-1)\sqrt{q}}{3\wp^m},\ w_2=\frac{2(\wp^m-\wp+1)q}{3\wp^m}+\frac{2(\wp^m-\wp+1)\sqrt{q}}{3\wp^m}$ with enumerates $A_{w_1}=\frac{(\wp^m-\wp+1)(q-1)}{\wp^m}, \ A_{w_2}=\frac{(\wp-1)(q-1)}{\wp^m}$.
\item If $\wp\equiv -1\pmod{3}$ and $m\ge 2$, then $\mathcal{C}_D$ is an $\left[\frac{(q-1)(\wp^{m-1}-1)}{\wp^{m-1}}, (\wp-1)\wp^{m-1}\right]$ ternary code having two weights $w_1=\frac{2(\wp^{m-1}-1)q}{3\wp^{m-1}}+\frac{2(\wp^{m-1}-1)\sqrt{q}}{3\wp^{m-1}},\ w_2=\frac{2(\wp^{m-1}-1)q}{3\wp^{m-1}}-\frac{2\sqrt{q}}{3\wp^{m-1}}$ with enumerates $A_{w_1}=\frac{q-1}{\wp^{m-1}}, \ A_{w_2}=\frac{(\wp^{m-1}-1)(q-1)}{\wp^{m-1}}$.
\end{enumerate}
\end{thm}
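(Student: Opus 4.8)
The plan is to reduce everything—length, weights, and frequencies—to the sums $S_{\wp^m}(a,b)$ and $S_{\wp^m}(a)$ already evaluated in Sections 2 and 3, and then to read off the two cases from the reduction modulo $3$ of the trace values in Lemma \ref{lem2.3}. Note throughout that $\phi(\wp^m)=(\wp-1)\wp^{m-1}$ is even, so $\sqrt q=3^{\phi(\wp^m)/2}$ is an integer and $(q-1)/\wp^m$ is even.

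\textbf{Length.} First I would compute $n=|D|$. As $x$ runs over $\fq^*$ the map $x\mapsto x^{(q-1)/\wp^m}$ is $\tfrac{q-1}{\wp^m}$-to-one onto $\langle\xi\rangle=\{\xi^\ell:0\le\ell\le\wp^m-1\}$, so $n=\tfrac{q-1}{\wp^m}\,\#\{\ell:\Tr(\xi^\ell)=0\}$. By Lemma \ref{lem2.3} (read in $\fp=\F_3$), $\Tr(\xi^\ell)=0$ whenever $\wp^{m-1}\nmid\ell$, which accounts for $\wp^m-\wp$ values, while the remaining $\wp$ values give $\Tr(1)\equiv(\wp-1)\wp^{m-1}$ and $\Tr(\xi^{k\wp^{m-1}})\equiv-\wp^{m-1}\pmod 3$. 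Reducing mod $3$ separates the cases: if $\wp\equiv1$ then $(\wp-1)\wp^{m-1}\equiv0$ but $-\wp^{m-1}\equiv-1$, so exactly $\wp^m-\wp+1$ indices qualify; if $\wp\equiv-1$ then none of the $\wp$ special values qualify, leaving $\wp^m-\wp$. Multiplying by $\tfrac{q-1}{\wp^m}$ gives the two stated lengths.

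\textbf{Weights.} For $x\in\fq$ the weight of $c_x=(\Tr(dx))_{d\in D}$ is $n-N_0(x)$ with $N_0(x)=\#\{d\in D:\Tr(dx)=0\}$. Additive-character orthogonality over $\F_3$ yields
\[
N_0(x)=\frac{n}{3}+\frac13\sum_{u\in\F_3^*}\sum_{d\in D}\chi(udx),
\]
and inserting the indicator $\mathbf 1[\Tr(d^{(q-1)/\wp^m})=0]=\tfrac13\sum_{v\in\F_3}\chi(vd^{(q-1)/\wp^m})$ converts the inner sum into binomial Weil sums, giving
\[
\sum_{u\in\F_3^*}\sum_{d\in D}\chi(udx)=\frac13\Big(-2+\sum_{u,v\in\F_3^*}S_{\wp^m}(v,ux)\Big).
\]
The key simplification is that $u^{(q-1)/\wp^m}=1$ for $u\in\F_3^*$, so the substitution $d\mapsto u^{-1}d$ gives $S_{\wp^m}(v,ux)=S_{\wp^m}(v,x)$; the quadruple sum collapses to $2\big(S_{\wp^m}(1,x)+S_{\wp^m}(2,x)\big)$ and the weight becomes an affine function of $S_{\wp^m}(1,x)+S_{\wp^m}(2,x)$.

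\textbf{Evaluating the Weil sums, frequencies, and dimension.} For $x\ne0$ write $c_1=x^{-(q-1)/\wp^m}=\xi^\ell\in\langle\xi\rangle$; then by Theorem \ref{thm3.3} with $c_2=2c_1=-c_1$,
\[
S_{\wp^m}(1,x)+S_{\wp^m}(2,x)=\big(\chi(c_1)+\chi(-c_1)\big)\sqrt q-\frac{\sqrt q+1}{\wp^m}\big(S_{\wp^m}(c_1)+S_{\wp^m}(-c_1)\big).
\]
Since multiplication by $\xi^\ell$ only permutes $\{\xi^i\}$, we have $S_{\wp^m}(c_1)=S_{\wp^m}(1)$ and $S_{\wp^m}(-c_1)=S_{\wp^m}(-1)=S_{\wp^m}(2)$, so the second term is the constant $\tfrac{\sqrt q+1}{\wp^m}\big(S_{\wp^m}(1)+S_{\wp^m}(2)\big)$, which I evaluate from (\ref{c2-11})--(\ref{c2-12}) of Corollary \ref{cor2.5}. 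All $x$-dependence then sits in $\chi(c_1)+\chi(-c_1)=\zeta_3^{\Tr(\xi^\ell)}+\zeta_3^{-\Tr(\xi^\ell)}$, equal to $2$ if $\Tr(\xi^\ell)=0$ and $-1$ otherwise, producing exactly two weights that differ by $\tfrac23\sqrt q$; substituting $n$ and $S_{\wp^m}(1)+S_{\wp^m}(2)$ matches the closed forms for $w_1,w_2$. The counts of qualifying $\ell$ are precisely those from the length step ($\wp^m-\wp+1$ and $\wp-1$ in case (a), resp.\ $\wp^m-\wp$ and $\wp$ in case (b)), and each $\ell$ is attained by $\tfrac{q-1}{\wp^m}$ values of $x$, yielding the stated $A_{w_1},A_{w_2}$ with sum $q-1$. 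Finally, since both $w_1,w_2$ are strictly positive, no nonzero $x$ gives the zero codeword, so $x\mapsto c_x$ is injective, $|\mathcal C_D|=q$, and $\dim_{\F_3}\mathcal C_D=\phi(\wp^m)=(\wp-1)\wp^{m-1}$. I expect the only real obstacle to be bookkeeping: tracking the mod-$3$ reductions governing $\Tr(\xi^\ell)$ and reconciling the constant $\tfrac{\sqrt q+1}{\wp^m}(S_{\wp^m}(1)+S_{\wp^m}(2))$ across the parity subcases of Corollary \ref{cor2.5} so the printed $w_1,w_2$ emerge.
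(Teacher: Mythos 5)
Your proposal is correct, and while it shares the paper's overall skeleton (orthogonality over $\F_3$ to express weights through $S_{\wp^m}(v,ux)$, then Theorem \ref{thm3.3} plus the Section 2 evaluations), it differs from the paper's execution in three substantive ways, each a genuine simplification. First, and most importantly, your translation-invariance observation $S_{\wp^m}(\xi^\ell c)=S_{\wp^m}(c)$ (immediate since multiplication by $\xi^\ell$ permutes $\{\xi^i\}$) collapses the longest part of the paper's proof: the paper instead expands $c_{1,1}=\xi^r$ in the basis $(\xi,\dots,\xi^{\phi(\wp^m)})$ case by case --- separately for $\wp^m\mid i$, for $1\le r\le(\wp-1)\wp^{m-1}$, and for $(\wp-1)\wp^{m-1}<r\le\wp^m-1$ where $Q_{\wp^m}(\xi)=0$ must be used to rewrite $\xi^r$ --- and then feeds each representation into Corollary \ref{cor2.5}, only to find (as your invariance predicts) that $S_{\wp^m}(c_{1,1})$ always equals $S_{\wp^m}(1)$; one can check that the paper's (\ref{c4-8}) coincides with (\ref{c2-11}) via $\zeta+\zeta^2=-1$. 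Your route isolates all $x$-dependence in $\chi(c_1)+\chi(-c_1)\in\{2,-1\}$, which is exactly the dichotomy $\wp^m\mid i$ or $\wp^{m-1}\nmid\langle i\rangle_{\wp^m}$ versus the rest that the paper recovers from Lemma \ref{lem2.3} in (\ref{c4-9}). Second, you compute the length by directly counting $\ell$ with $\Tr(\xi^\ell)=0$ via Lemma \ref{lem2.3}, where the paper routes through the character-sum identity (\ref{c4-2}) and Corollary \ref{cor2.5}/(\ref{c2-2-2}); both give (\ref{c4-4-4}). Third, you obtain $A_{w_1},A_{w_2}$ by direct fiber counting ($(q-1)/\wp^m$ values of $x$ per residue $\ell$), whereas the paper uses the first two Pless power moments together with the observation that $\mathcal{C}_D^\perp$ has no weight-one codeword; your count is self-contained and even a bit stronger, since it identifies which $b$ yield which weight without the moment identities. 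I verified your closed forms: with $T:=S_{\wp^m}(1)+S_{\wp^m}(2)$ equal to $2\wp^m-3\wp+3$ (resp.\ $2\wp^m-3\wp$, independently of the parity of $m$), the weight $\frac{2n}{3}+\frac{2}{9}-\frac{2\kappa\sqrt q}{9}+\frac{2(\sqrt q+1)T}{9\wp^m}$ with $\kappa\in\{2,-1\}$ reproduces exactly the stated $w_1,w_2$ in both cases, with gap $\frac{2}{3}\sqrt q$. The only thing the paper's heavier route buys is the explicit basis coefficients of $\xi^r$, which the authors highlight in the remark after Theorem \ref{thm4.1} as a method for computing representation coefficients of $ab^{-\frac{q-1}{N}}$; your shortcut discards that byproduct but is otherwise strictly cleaner. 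Minor points you rely on and correctly flag: $(q-1)/\wp^m$ is even (so $S_{\wp^m}(v,2x)=S_{\wp^m}(v,x)$ and $(-1)^{-(q-1)/\wp^m}=1$), and positivity of $w_1,w_2$ (clear for $m\ge2$ in case (b)) to get injectivity and hence dimension $(\wp-1)\wp^{m-1}$.
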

\begin{proof}
Let $n=\#D$, the length of the code $\mathcal{C}_D$. Let
$$n_0=\#\{x\in\mathbb{F}_q: \Tr(x^{\frac{q-1}{\wp^m}})=0\}.$$
Clearly, $n=n_0-1$. First of all, we compute $n$ by studying $n_0$. Note that
$$n_0=\frac{1}{3}\sum_{x\in\fq}\sum_{y\in\fth}\zeta^{y{\rm Tr}(x^{\frac{q-1}{\wp^m}})},$$
where $\zeta$ is a primitive $3$-th root of unity. It then follows that
\begin{align}\label{c4-2}
n_0&=3^{\phi(\wp^m)-1}+\frac{1}{3}\sum_{y\in\fth^*}
\sum_{x\in\fq}\zeta^{y{\rm Tr}(x^{\frac{q-1}{\wp^m}})}\nonumber\\
&=3^{\phi(\wp^m)-1}+\frac{1}{3}\sum_{y\in\fth^*}
\Big(\sum_{x\in\fq^*}\chi(yx^{\frac{q-1}{\wp^m}})+1\Big)\nonumber\\
&=3^{\phi(\wp^m)-1}+\frac{1}{3}\sum_{y\in\fth^*}
\left(S_{\wp^m}(y,0)+1\right)\nonumber\\
&=3^{\phi(\wp^m)-1}+\frac{1}{3}
\left(S_{\wp^m}(1,0)+S_{\wp^m}(2,0)+2\right).
\end{align}
Making use of Corollary \ref{cor2.5} and Equation (\ref{c2-2-2}), one has that
\begin{align}\label{c4-3}
S_{\wp^m}(1,0)+S_{\wp^m}(2,0)=\begin{cases}
\frac{q-1}{\wp^m}(2\wp^m-3\wp+3),&\text{if}\ \wp\equiv 1\pmod{3},\\
\frac{q-1}{\wp^m}(2\wp^m-3\wp),&\text{if}\ \wp\equiv -1\pmod{3}.
\end{cases}
\end{align}
It follows from (\ref{c4-2}) that
\begin{align}\label{c4-4-4}
n_0=\begin{cases}
\frac{q-1}{\wp^m}(\wp^m-\wp+1)+1,&\text{if}\ \wp\equiv 1\pmod{3},\\
\frac{q-1}{\wp^m}(\wp^m-\wp)+1,&\text{if}\ \wp\equiv -1\pmod{3}.
\end{cases}
\end{align}
Therefore, the length of $\mathcal{C}_D$ is obtained immediately.

Next, let us settle the weight distribution of $\mathcal{C}_D$. Let $D=\{d_1,d_2,\ldots,d_n\}$. One needs to understand the weight of codeword $\bm{c}_b=(\Tr(bd_1),\Tr(bd_2),\ldots,\Tr(bd_n))\in\mathcal{C}_D$ for any $b\in\fq^*$. Actually, the weight of $\bm{c}_b$ is equal to $n_0-N_b$, where $N_b$ is defined by
$$N_b:=\#\{x\in\fq:\ \Tr(x^{\frac{q-1}{\wp^m}})=0\ \text{and}\ \Tr(bx)=0\}.$$
We first compute $N_b$ for each $b\in\fq^*$, which will be done in what follows. For each $b\in\fq^*$, by definition, we have that
\begin{align*}
N_b&=\frac{1}{9}\sum_{x\in\fq}\sum_{y\in\fth}\zeta^{y{\rm Tr}(x^{\frac{q-1}{\wp^m}})}\sum_{z\in\fth}\zeta^{z{\rm Tr}(bx)}\nonumber\\
&=\frac{1}{9}\sum_{x\in\fq}\Big(\sum_{y\in\fth^*}\zeta^{y{\rm Tr}(x^{\frac{q-1}{\wp^m}})}+1\Big)\Big(\sum_{z\in\fth^*}\zeta^{z{\rm Tr}(bx)}+1\Big)\nonumber\\
&=\frac{1}{9}\Big(q+\sum_{z\in\fth^*}\sum_{x\in\fq}\chi(zbx)+
\sum_{y\in\fth^*}\sum_{x\in\fq}\chi(yx^{\frac{q-1}{\wp^m}})+
\sum_{y\in\fth^*}\sum_{z\in\fth^*}\sum_{x\in\fq}
\chi(yx^{\frac{q-1}{\wp^m}}+zbx)\Big).
\end{align*}
It implies that
\begin{align}\label{c4-4}
N_b=1/9&(S_{\wp^m}(1,0)+S_{\wp^m}(2,0)+S_{\wp^m}(1,b)
\nonumber\\
&+S_{\wp^m}(1,2b)+S_{\wp^m}(2,b)+S_{\wp^m}(2,2b)+q+6),
\end{align}
since $\sum_{x\in\fq}\chi(zbx)=0$ for any $z\in\fth^*$. Note that
$S_{\wp^m}(1,0)+S_{\wp^m}(2,0)$ has been evaluated in (\ref{c4-3}). One only needs to handle with $S_{\wp^m}(y,zb)$ for any $y,z\in\fth^*$. From Theorem \ref{thm3.3}, we know that
\begin{align}\label{c4-5}
S_{\wp^m}(y,zb)=\chi(c_{y,z})\sqrt{q}-\frac{\sqrt{q}+1}{\wp^m}S_{\wp^m}(c_{y,z}),
\end{align}
where $c_{y,z}:=y(zb)^{-\frac{q-1}{\wp^m}}$. Let $g$ be a primitive element of $\fq$. Since $b\in\fq^*$, let $b=g^{-i}$ for some integer $i$ with $0\le i\le q-2$. Then the values of $S_{\wp^m}(y,zb)$ can be obtained in the following parts.

$\bullet$ Part I: Evaluate $S_{\wp^m}(1,b)$. Recall that $\xi=g^{\frac{q-1}{\wp^m}}$. Then $c_{1,1}=b^{-\frac{q-1}{\wp^m}}=(g^{\frac{q-1}{\wp^m}})^i=\xi^i$.
In order to get the values of $\chi(c_{1,1})$ and $S(c_{1,1})$, one may consider the following cases.\\
{\sc Case 1}. $\wp^m\mid i$. In this case, $c_{1,1}=\xi^i=1$. So
$\chi(c_{1,1})=\chi(1)=\zeta^{{\rm Tr}(1)}=\zeta^{\phi(\wp^m)}$, i.e.,
\begin{align}\label{c4-6}
\chi(c_{1,1})=\begin{cases}
1,&\text{if}\ \wp\equiv 1\pmod{3},\\
\zeta^{(-1)^{m-1}},&\text{if}\ \wp\equiv -1\pmod{3}.
\end{cases}
\end{align}
Substituting (\ref{c4-6}) and (\ref{c2-11}) into (\ref{c4-5}) yields that
\begin{align*}
S_{\wp^m}(1,b)=\begin{cases}
-1+\frac{(\sqrt{q}+1)(p-1)(\zeta+2)}{\wp^m},&\text{if}\ \wp\equiv 1\pmod{3},\\
-\zeta+\frac{(\sqrt{q}+1)((\wp^m+\wp-2)\zeta-\wp^m+2\wp-1)}{\wp^m},&\text{if}\ \wp\equiv -1\pmod{3}\ \text{and}\ 2\nmid m,\\
-\zeta^2+\frac{(\sqrt{q}+1)((\wp^m+\wp-2)\zeta^2-\wp^m+2\wp-1)}{\wp^m},&\text{if}\ \wp\equiv -1\pmod{3}\ \text{and}\ 2\mid m.
\end{cases}
\end{align*}
{\sc Case 2}. $\wp^m\nmid i$. Write $i=s\wp^m+r$, where $s$ and $r$ are nonnegative integers with $1\le r\le \wp^m-1$. Clearly, $c_{1,1}=\xi^{r}$. First, let $1\le r\le (\wp-1)\wp^{m-1}$. Then there exists a unique integer pair $(k_0,i_0)$ with $1\le k_0\le \wp-1$ and $0\le i_0\le\wp^{m-1}-1$ such that $r=k_0\wp^{m-1}-i_0$. Let
$$c_{1,1}=\sum_{j=1}^{(\wp-1)\wp^{m-1}}a_j\xi^j$$
with each $a_j\in\fth$. Since $c_{1,1}=\xi^i=\xi^r$ one gets that
\begin{align}\label{c4-7}
c_{1,1}^{(i_0)}=(\underbrace{0,\ldots,0,1,0,\ldots,0}_{\wp-1}),\ \text{and}\ c_{1,1}^{(h)}=(\underbrace{0,0,\ldots,0}_{\wp-1})
\end{align}
for all $h$ with $h\ne i_0$ and $0\le h\le \wp^{m-1}-1$, where each $c_{1,1}^{(t)}$ is defined as (\ref{c2-2-10}), and the only one $1$ in the components of $c_{1,1}^{(i_0)}$ is the $k_0$-th component of this vector. Hence, by (\ref{c4-7}) and Corollary \ref{cor2.5}, we arrive at that
\begin{align}\label{c4-8}
S_{\wp^m}(c_{1,1})=\begin{cases}
(\wp-1)\zeta^2+\wp^m-\wp+1,&\text{if}\ \wp\equiv 1\pmod{3},\\
(\wp-2)\zeta^2+\wp^m-\wp-1,&\text{if}\ \wp\equiv -1\pmod{3}\ \text{and}\ 2\nmid m,\\
(\wp-2)\zeta+\wp^m-\wp-1,&\text{if}\ \wp\equiv -1\pmod{3}\ \text{and}\ 2\mid m.
\end{cases}
\end{align}
On the other hand, Lemma \ref{lem2.3} gives that
\begin{align}\label{c4-9}
\chi(c_{1,1})=\zeta^{{\rm Tr}(\xi^{r})}=\begin{cases}
\zeta^2,&\text{if}\ \wp^{m-1}\mid r\ \text{and}\ \wp\equiv 1\pmod{3},\\
\zeta^2,&\text{if}\ \wp^{m-1}\mid r, \wp\equiv -1\pmod{3}\ \text{and}\ 2\nmid m,\\
\zeta,&\text{if}\ \wp^{m-1}\mid r, \wp\equiv -1\pmod{3}\ \text{and}\ 2\mid m,
\\
1,&\text{if}\ \wp^{m-1}\nmid r.
\end{cases}
\end{align}
Putting Equations (\ref{c4-5}), (\ref{c4-8}) and (\ref{c4-9}) together, we conclude that
\begin{align}\label{c4-10}
S_{\wp^m}(1,b)=\begin{cases}
-\zeta^2+\frac{(\sqrt{q}+1)(-\wp^m+\wp-1)(\zeta+2)}{\wp^m},&\text{if}\ \wp\equiv 1\pmod{3},\\
-\zeta^2+\frac{(\sqrt{q}+1)((\wp^m-\wp+2)\zeta^2-\wp^m+\wp+1)}{\wp^m},&\text{if}\ \wp\equiv -1\pmod{3}\ \text{and}\ 2\nmid m,\\
-\zeta+\frac{(\sqrt{q}+1)((\wp^m-\wp+2)\zeta-\wp^m+\wp+1)}{\wp^m},&\text{if}\ \wp\equiv -1\pmod{3}\ \text{and}\ 2\mid m,
\end{cases}
\end{align}
when $\wp^{m-1}\mid r$, and
\begin{align}\label{c4-11}
S_{\wp^m}(1,b)=\begin{cases}
-1+\frac{(\sqrt{q}+1)(p-1)(\zeta+2)}{\wp^m},&\text{if}\ \wp\equiv 1\pmod{3},\\
-1+\frac{(\sqrt{q}+1)((\wp-2)\zeta+2\wp-1)}{\wp^m},&\text{if}\ \wp\equiv -1\pmod{3}\ \text{and}\ 2\nmid m,\\
-1+\frac{(\sqrt{q}+1)((\wp-2)\zeta^2+2\wp-1))}{\wp^m},&\text{if}\ \wp\equiv -1\pmod{3}\ \text{and}\ 2\mid m,
\end{cases}
\end{align}
when $\wp^{m-1}\nmid r$. In the following, let $(\wp-1)\wp^{m-1}<r\le \wp^m-1$. Write $r=\wp^{m}-1-t$ for some integer $t$ with $0\le t\le \wp^{m-1}-2$. Recall that the minimal polynomial of $\xi$ over $\fth$ is
$Q_{\wp^m}(x)=x^{(\wp-1)\wp^{m-1}}+x^{(\wp-2)\wp^{m-1}}+\cdots
+x^{\wp^{m-1}}+1$. So $$c_{1,1}=\xi^{r}=\xi^{\wp^m-1-t}=2\xi^{\wp^{m-1}-1-t}+2\xi^{2\wp^{m-1}-1-t}+\cdots
+2\xi^{(\wp-1)\wp^{m-1}-1-t}.$$
It then implies that
$$c_{1,1}^{(t+1)}=(\underbrace{2,2,\ldots,2}_{\wp-1}),\ \text{and}\  c_{1,1}^{(j)}=(\underbrace{0,0,\ldots,0}_{\wp-1})$$
for $j\ne t+1$. Employing Corollary \ref{cor2.5}, we then easily obtain $S_{\wp^m}(c_{1,1})$, in fact, which have the same values as (\ref{c4-8}). Note that $(\wp-1)\wp^{m-1}<r\le \wp^m-1$. It follows from Lemma \ref{lem2.3} that $\chi(c_{1,1})=1$. Therefore, by (\ref{c4-5}) we derive $S{\wp^m}(1,b)$, which is consistent with (\ref{c4-11}).

Thus, the values of $S_{\wp^m}(1,b)$ have been completely determined.

$\bullet$ Part II: Evaluate $S_{\wp^m}(1,2b)$. Note that $c_{1,2}=(2b)^{-\frac{q-1}{\wp^m}}=b^{-\frac{q-1}{\wp^m}}=c_{1,1}$ since $\frac{q-1}{\wp^m}$ is an even number. It follows that $S_{\wp^m}(1,2b)=S_{\wp^m}(1,b)$. So $S_{\wp^m}(1,2b)$ is also obtained.

$\bullet$ Part III: Evaluate $S_{\wp^m}(2,b)$ and $S_{\wp^m}(2,2b)$. First, similar to Part II, we have $S_{\wp^m}(2,b)=S_{\wp^m}(2,2b)$. Note that $c_{2,1}=2b^{-\frac{q-1}{\wp^m}}=2\xi^i$. Hence, if let $S_{\wp^m}(1,b)$ be a function with variable $\zeta$, denoted by $S_{\wp^m}(1,b;\zeta)$, then from Part I, we readily find that $S_{\wp^m}(2,b)=S_{\wp^m}(1,b;\zeta^2)$. Finally, $S_{\wp^m}(2,b)$ and $S_{\wp^m}(2,2b)$ are settled.

To sum up, substituting the results of Parts I, II and III, and (\ref{c4-5}) into (\ref{c4-4}) yields that
\begin{align*}
N_b=\begin{cases}
\frac{\wp^{m}-\wp+1}{3\wp^m}q+\frac{2(\wp-1)}{3\wp^m}\sqrt{q}
+\frac{\wp-1}{\wp^m},&\text{if}\ \wp\equiv 1\pmod{3},\\
\frac{\wp^{m-1}-1}{3\wp^{m-1}}q-\frac{2(\wp^{m-1}-1)}{3\wp^{m-1}}\sqrt{q}
+\frac{1}{\wp^{m-1}},&\text{if}\ \wp\equiv -1\pmod{3},
\end{cases}
\end{align*}
if $\wp^m\mid {\rm Ind}_g(b^{-1})$,
\begin{align*}
N_b=\begin{cases}
\frac{\wp^{m}-\wp+1}{3\wp^m}q+\frac{2(-\wp^2+\wp-1)}{3\wp^m}\sqrt{q}
+\frac{\wp-1}{\wp^m},&\text{if}\ \wp\equiv 1\pmod{3},\\
\frac{\wp^{m-1}-1}{3\wp^{m-1}}q-\frac{2(\wp^{m-1}-1)}{3\wp^{m-1}}\sqrt{q}
+\frac{1}{\wp^{m-1}},&\text{if}\ \wp\equiv -1\pmod{3},
\end{cases}
\end{align*}
if $\wp^m\nmid {\rm Ind}_g(b^{-1})$ and $\wp^{m-1}\mid \langle {\rm Ind}_g(b^{-1})\rangle_{\wp^m}$, and
\begin{align*}
N_b=\begin{cases}
\frac{\wp^{m}-\wp+1}{3\wp^m}q+\frac{2(\wp-1)}{3\wp^m}\sqrt{q}
+\frac{\wp-1}{\wp^m},&\text{if}\ \wp\equiv 1\pmod{3},\\
\frac{\wp^{m-1}-1}{3\wp^{m-1}}q+\frac{2}{3\wp^{m-1}}\sqrt{q}
+\frac{1}{\wp^{m-1}},&\text{if}\ \wp\equiv -1\pmod{3},
\end{cases}
\end{align*}
if $\wp^m\nmid {\rm Ind}_g(b^{-1})$ and $\wp^{m-1}\nmid \langle {\rm Ind}_g(b^{-1})\rangle_{\wp^m}$, where ${\rm Ind}_g(b^{-1})$ denotes the index of $b^{-1}$ with respect to $g$ and $\langle {\rm Ind}_g(b^{-1})\rangle_{\wp^m}$ denotes the least nonnegative residue of ${\rm Ind}_g(b^{-1})$ modulo $\wp^m$. It follows that $\mathcal{C}$ is a two-weight code either having weights
$$w_1=\frac{2(\wp^m-\wp+1)}{3\wp^m}q-\frac{2(\wp-1)}{3\wp^m}\sqrt{q},\ w_2=\frac{2(\wp^m-\wp+1)}{3\wp^m}q+\frac{2(\wp^2-\wp+1)}{3\wp^m}\sqrt{q}$$
if $\wp\equiv 1\pmod{3}$, or having weights
$$w_1=\frac{2(\wp^{m-1}-1)}{3\wp^{m-1}}q+\frac{2(\wp^{m-1}-1)}{3\wp^{m-1}}\sqrt{q},\ w_2=\frac{2(\wp^{m-1}-1)}{3\wp^{m-1}}q-\frac{2}{3\wp^{m-1}}\sqrt{q}$$
if $\wp\equiv -1\pmod{3}$.

Let $A_{w_i}$ be the number of codewords with weight $w_i$, $i=1,2$. We now are able to determine $A_{w_i}$. First, from the above computation we know that the weight of $\bm{c}_b$ is always greater than zero for any $b\in\fq^*$. It implies that the map
$$x\mapsto (\Tr(xd_1),\Tr(xd_2),\ldots,\Tr(xd_n))$$
from $\fq$ to $\mathcal{C}_D$ is a bijection. It gives us $\#\mathcal{C}_D=q$. Consequently, the dimension of $\mathcal{C}_D$ is $(\wp-1)\wp^{m-1}$. On the other hand, it is observed that there is no codeword of weight one in $\mathcal{C}_D^{\perp}$ since the map $\Tr$ is onto. Therefore, by the first two Pless Power Moments, we have
$$\begin{cases}
A_{w_1}+A_{w_2}=q-1,\\
A_{w_1}w_1+A_{w_2}w_2=\frac{1}{3}(2n_0-2)q,
\end{cases}$$
where $n_0$ is given by (\ref{c4-4-4}).
It follows that
\begin{align*}
\begin{cases}
A_{w_1}=\frac{(\wp^m-\wp+1)(q-1)}{\wp^m},\ A_{w_2}=\frac{(\wp-1)(q-1)}{\wp^m},&\text{if}\ \wp\equiv 1\pmod{3},\\
A_{w_1}=\frac{q-1}{\wp^{m-1}},\ A_{w_2}=\frac{(\wp^{m-1}-1)(q-1)}{\wp^{m-1}},&\text{if}\ \wp\equiv -1\pmod{3},
\end{cases}
\end{align*}
as desired. It completes the proof of Theorem \ref{thm4.1}.
\end{proof}
\begin{rem}
Theorem \ref{thm3.3} gives the explicit evaluation of $S_N(a,b)$ if the representation coefficients of $ab^{-\frac{q-1}{N}}$ by the basis
$(\xi,\xi^2,\ldots,\xi^{\phi(N)})$ are computable. However, it seems not easy to get the representation coefficients of $ab^{-\frac{q-1}{N}}$ for any $a,b\in\mathbb{F}_q^*$. The proof of Theorem \ref{thm4.1} provides a method of computing the representation coefficients of $ab^{-\frac{q-1}{N}}$ for $a\in\fp^*$ and $b\in\fq^*$.
\end{rem}

\section{The dual codes of $\mathcal{C}_D$}
Let $m$ be an integer greater than $1$, and $\wp$ be an odd prime such that $3$ is a primitive root modulo $\wp^m$. Let $\mathcal{C}_D$ be the code defined in Section 4, and let $\mathcal{C}_D^{\perp}$ be the dual code of $\mathcal{C}_D$. Then we have the following results.
\begin{thm}\label{thm5.1}
The dual code $\mathcal{C}_D^{\perp}$ of the two-weight code $\mathcal{C}_D$ is a ternary code with parameters $\left[n,n-\wp^{m-1}(\wp-1),2\right]$, where
\begin{align*}
n=\begin{cases}
\frac{q-1}{\wp^m}(\wp^m-\wp+1),&\text{if}\ \wp\equiv 1\pmod{3},\\
\frac{q-1}{\wp^m}(\wp^m-\wp),&\text{if}\ \wp\equiv -1\pmod{3}.
\end{cases}
\end{align*}
\end{thm}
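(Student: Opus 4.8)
The plan is to treat the three parameters of $\mathcal{C}_D^{\perp}$ in turn, with only the minimum distance requiring genuine work. The length of a code and of its dual coincide, so the value of $n$ is inherited directly from Section 4, where $n=n_0-1$ was computed in (\ref{c4-4-4}); one checks that the two expressions for $n$ in the statement agree with the lengths in Theorem \ref{thm4.1} (for $\wp\equiv-1\pmod 3$ one uses $\frac{q-1}{\wp^m}(\wp^m-\wp)=\frac{(q-1)(\wp^{m-1}-1)}{\wp^{m-1}}$). For the dimension, recall that in the proof of Theorem \ref{thm4.1} the map $x\mapsto(\Tr(xd_1),\ldots,\Tr(xd_n))$ was shown to be a bijection from $\fq$ onto $\mathcal{C}_D$, whence $\dim\mathcal{C}_D=\phi(\wp^m)=(\wp-1)\wp^{m-1}$; the identity $\dim\mathcal{C}_D+\dim\mathcal{C}_D^{\perp}=n$ then yields $\dim\mathcal{C}_D^{\perp}=n-\wp^{m-1}(\wp-1)$ at once.

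The core is the claim $d(\mathcal{C}_D^{\perp})=2$, and the first step is to describe $\mathcal{C}_D^{\perp}$ concretely. Writing $D=\{d_1,\ldots,d_n\}$, a vector $(c_1,\ldots,c_n)\in\mathbb{F}_3^n$ lies in $\mathcal{C}_D^{\perp}$ exactly when $\sum_i c_i\Tr(bd_i)=0$ for all $b\in\fq$; by $\mathbb{F}_3$-linearity of the trace this is equivalent to $\Tr\!\big(b\sum_i c_id_i\big)=0$ for all $b$, and since the trace form is nondegenerate, to the single equation $\sum_i c_id_i=0$ in $\fq$. Thus $\mathcal{C}_D^{\perp}$ is precisely the space of $\mathbb{F}_3$-linear relations among the elements of $D$, and $d(\mathcal{C}_D^{\perp})$ is the least number of the $d_i$ that are $\mathbb{F}_3$-linearly dependent.

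I would then bound this quantity from both sides. A weight-one relation would require $c_id_i=0$ with $c_i\in\mathbb{F}_3^{*}$ and $d_i\in\fq^{*}$, which is impossible in the field $\fq$; hence $d(\mathcal{C}_D^{\perp})\ge 2$. For the matching upper bound I would invoke the parity fact already used in the proof of Theorem \ref{thm4.1}, namely that $\frac{q-1}{\wp^m}$ is even (because $q-1=3^{(\wp-1)\wp^{m-1}}-1$ carries a large power of $2$ while $\wp^m$ is odd). Then $(-x)^{\frac{q-1}{\wp^m}}=x^{\frac{q-1}{\wp^m}}$ for all $x$, so $\Tr(x^{\frac{q-1}{\wp^m}})$ is invariant under $x\mapsto-x$ and the defining set $D$ is closed under negation. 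Since $m>1$ forces $n>0$, $D$ is nonempty; fixing any $d\in D$ we get $-d\in D$, and as $\mathrm{char}(\fq)=3$ we have $d\ne-d$, so $d$ and $-d$ occupy two distinct coordinates, say $d_i=d$ and $d_j=-d$. The vector with entry $1$ in positions $i,j$ and $0$ elsewhere satisfies $d_i+d_j=0$, hence belongs to $\mathcal{C}_D^{\perp}$ and has weight $2$, giving $d(\mathcal{C}_D^{\perp})=2$. I do not anticipate a real obstacle: once $\mathcal{C}_D^{\perp}$ is identified with the relation space, the lower bound is the non-vanishing of each $d_i$ and the upper bound is the stability of $D$ under negation; the only points demanding care are the evenness of $\frac{q-1}{\wp^m}$ and the nonemptiness of $D$ under the hypothesis $m>1$.
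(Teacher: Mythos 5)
Your proposal is correct and follows essentially the same route as the paper: the length and dimension are inherited from Theorem \ref{thm4.1} together with $\dim\mathcal{C}_D+\dim\mathcal{C}_D^{\perp}=n$, and the minimum distance $2$ is obtained exactly as in the paper by pairing each $d\in D$ with $-d\in D$ (distinct since the characteristic is $3$) to exhibit a weight-two dual codeword, with weight one excluded by nondegeneracy of the trace. Your write-up is in fact slightly more careful than the paper's, since you explicitly justify the closure of $D$ under negation via the evenness of $\frac{q-1}{\wp^m}$ (which the paper uses only implicitly) and note where the hypothesis $m>1$ is needed.
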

\begin{proof}
Note that $\dim(\mathcal{C}_D)+\dim(\mathcal{C}_D^{\perp})=n$. So the dimension of $\mathcal{C}_D^{\perp}$ follows from Theorem \ref{thm4.1}. In the following, let us determine the minimum distance $d$ of $\mathcal{C}_D^{\perp}$. It is observed that $d\ge 2$ since the map $\Tr$ is onto. Let $D=\{d_1,d_2,\ldots,d_n\}$. We know that $\#D>2$ since $m\ge 2$. Taking an element $d_i$ of $D$, one finds that $-d_i\in D$ and $d_i\ne -d_i$. Then $-d_i=d_j$ for some $j\ne i$. It is easily checked that $\bm{c}=(c_1,c_2,\ldots,c_n)\in\mathcal{C}_D^{\perp}$, where $c_i=c_j=1$ and $c_k=0$ for all $k\ne i,j$. It then follows that $d=2$. The proof of Theorem \ref{thm5.1} is done.
\end{proof}
\begin{defn}\label{defn5.5}
A code $\mathcal{C}$ with parameter $[n,k,d]$ is called \textit{optimal} if no $[n,k,d+1]$ code exists.
\end{defn}
Then by the sphere packing bound, the last result of the paper follows immediately.
\begin{cor}\label{cor5.3}
The dual code $\mathcal{C}_D^{\perp}$ is optimal.
\end{cor}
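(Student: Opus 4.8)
The final statement to prove is Corollary \ref{cor5.3}: that the dual code $\mathcal{C}_D^{\perp}$ is optimal. By Theorem \ref{thm5.1} we already know $\mathcal{C}_D^{\perp}$ has parameters $[n, n-\wp^{m-1}(\wp-1), 2]$ with $n$ given explicitly, so the only thing left to establish is that no ternary $[n, k, 3]$ code exists with the same length $n$ and dimension $k = n-\wp^{m-1}(\wp-1)$. By Definition \ref{defn5.5}, optimality is exactly this nonexistence statement, and the natural tool is the sphere packing (Hamming) bound.

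The plan is to apply the sphere packing bound directly. For a ternary $[n,k,d]$ code the bound asserts that
\begin{align*}
3^k \sum_{i=0}^{\lfloor (d-1)/2\rfloor} \binom{n}{i} 2^i \le 3^n.
\end{align*}
First I would \emph{suppose for contradiction} that a ternary $[n,k,3]$ code exists with $k = n - \wp^{m-1}(\wp-1)$; a distance-$3$ code corrects one error, so $\lfloor (d-1)/2\rfloor = 1$ and the bound reads $3^k(1 + 2n) \le 3^n$. Substituting $k = n - \wp^{m-1}(\wp-1)$ and dividing through by $3^k$, this becomes $1 + 2n \le 3^{\wp^{m-1}(\wp-1)} = 3^{\phi(\wp^m)} = q$. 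So everything reduces to checking the single inequality $1 + 2n \le q$, and the goal is to show it \emph{fails}, i.e. that $1+2n > q$, which contradicts the existence of such a code and thereby forces $d = 2$ to be optimal.

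Next I would verify $1 + 2n > q$ in each of the two cases for $\wp \bmod 3$ supplied by Theorem \ref{thm5.1}. In the case $\wp\equiv 1\pmod 3$ we have $n = \frac{(q-1)(\wp^m-\wp+1)}{\wp^m}$, and since $\wp^m - \wp + 1 > \tfrac{1}{2}\wp^m$ for all relevant $\wp,m$, we get $n > \tfrac{1}{2}(q-1)$, so $1 + 2n > q$. In the case $\wp\equiv -1\pmod 3$ we have $n = \frac{(q-1)(\wp^m-\wp)}{\wp^m}$ (equivalently $\frac{(q-1)(\wp^{m-1}-1)}{\wp^{m-1}}$ as in Theorem \ref{thm4.1}); here I would use $m \ge 2$, which guarantees $\wp^{m-1} \ge \wp \ge 5$ so that $\wp^{m-1}-1 > \tfrac12 \wp^{m-1}$, again yielding $n > \tfrac12(q-1)$ and hence $1+2n > q$. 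In both cases the factor multiplying $(q-1)/\wp^m$ exceeds $\wp^m/2$, which is the clean underlying reason the bound is violated.

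The argument is short, so there is no single hard obstacle of a conceptual nature; the one point deserving care is the bookkeeping in the second case, where the hypothesis $m \ge 2$ (carried over from the standing assumption at the start of Section 5) is genuinely needed — for $m=1$ with $\wp \equiv -1 \pmod 3$ the length $n$ could be too small and the inequality could degenerate, which is precisely why Theorem \ref{thm5.1} and this corollary are stated for $m > 1$. I would therefore make explicit that $m \ge 2$ is invoked to bound $\wp^{m-1} - 1$ from below, and confirm that the smallest admissible parameters ($\wp = 5$, $m = 2$, the smallest odd prime with $3$ a primitive root and $\wp \equiv -1 \pmod 3$) still satisfy the strict inequality, so that the sphere packing bound indeed rules out distance $3$ and establishes optimality.
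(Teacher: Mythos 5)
Your proof is correct and takes essentially the same route as the paper: apply the sphere packing bound, reduce optimality to the single inequality $n > \frac{q-1}{2}$ (equivalently $1+2n > q = 3^{\phi(\wp^m)}$), and verify it from the length formula of Theorem \ref{thm5.1} — you merely spell out the case check that the paper's proof leaves implicit. One small correction: the standing hypothesis $m \ge 2$ is needed in \emph{both} cases, not only when $\wp \equiv -1 \pmod{3}$, since for $m=1$ and $\wp \equiv 1 \pmod{3}$ one would have $n = \frac{q-1}{\wp} < \frac{q-1}{2}$ and the sphere packing argument would fail there as well.
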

\begin{proof}
Let $n$ and $d$ be the length and minimum distance of $\mathcal{C}_{D}^{\perp}$. The sphere packing bound tells that 
$$\big|\mathcal{C}_{\alpha,\beta}^{\perp}\big|\le \frac{3^n}{\sum_{i=0}^t\binom{n}{i}2^i},$$
where $t$ denotes the greatest integer no more than $\frac{d-1}{2}$. Note that $d=2$. It follows that the dual code $\mathcal{C}_{\alpha,\beta}^{\perp}$ is optimal if $n>\frac{q-1}{2}$. So the desired result follows from Theorem \ref{thm5.1}.
\end{proof}

\section{Conclusion}
In this paper, we provided explicit evaluations of binomial Weil sums $S_N(a,b)$ for all various values of $N$, assuming that $p$ is a primitive root modulo 
$N$. Our results significantly extend previous work by removing additional conditions and offering a more general and elementary approach. Moreover, we constructed a new class of ternary linear codes, for which the weight distribution was fully determined. This weight distribution analysis demonstrated the optimality of the dual codes with respect to the sphere packing bound. 

In addition, the method presented in this paper is expected to enable the construction of general $p$-ary codes from the evaluations of the exponential sums, and to facilitate the determination of their Hamming weight distributions. Specifically, let $N \in \{4, \wp^m, 2\wp^m\}$, and let $p$ be a general odd prime such that $p$ is a primitive root modulo $N$. By replacing the defining set 
\[
\{x \in \mathbb{F}_q^* : \text{Tr}(x^{\frac{q-1}{\wp^m}}) = 0\}
\]
with the more general set
\[
D = \{x \in \mathbb{F}_q^* : \text{Tr}\left( x^{\frac{q-1}{N}} + \beta x \right) = \alpha\},
\]
where $\alpha \in \mathbb{F}_p$ and $\beta \in \mathbb{F}_q$, similar results for the code $\mathcal{C}_{D}$ can be derived using analogous techniques. Recently, leveraging the results of this paper, the first author \cite{[C]} established the weight distribution of $\mathcal{C}_{D}$ for any $\alpha \in \mathbb{F}_p$ and $\beta \in \mathbb{F}_q$ when $p = 3$ and $N = 2\wp^m$.

\section*{acknowledgment}
The first author would like to express his sincere gratitude to Prof. Arne Winterhof at RICAM, Austrian Academy of Sciences, for his invaluable guidance and support during his academic visit.

% use section* for acknowledgment
%\section*{Acknowledgment}
%The author would like to thank the anonymous referees for their helpful comments which improved the presentation of the paper.


\begin{thebibliography}{99}

\bibitem{[ADHK]} R. Anderson, C. Ding, T. Helleseth and T. Kl{\o}ve, How to build robust shareed control systems, Des. Codes Cryptogr. \textbf{15} (1998), 111-124.

\bibitem{[BGMV]} I. Blake, S. Gao, R. Mullin, S. Vanstone
and T. Yaghoobian, \textit{Applications of Finite Fields}, Kluwer Academic Publishers, Boston/Dordrecht/Lancaster, 1993.

\bibitem{[Car1]} L. Carlitz, \textit{Explicit evaluation of certain exponential sums}, Math. Scand. \textbf{44}(1979), 5-16.

\bibitem{[Car2]} L. Carlitz, \textit{Evaluation of of some exponential sums over a finite field}, Math. Nachr. \textbf{96}(1980), 319-339.

\bibitem{[Cou]} R. Coulter, \textit{Explicit evaluations of some Weil sums}, Acta. Arith.\textbf{83}(1998), 241-251.

\bibitem{[C]} K. Cheng, \textit{A class of ternary codes with few weights}, preprint, 2024.


\bibitem{[CK]} A. Calderbank and W. Kantor, \textit{The geometry of two-weight codes}, Bull. London Math. Soc. 18(1986), 97-122.

\bibitem{[Din1]} C. Ding, \textit{Linear codes from some $2$-design}, IEEE Trans. Inf. Theory \textbf{60} (2015), 3265-3275.

\bibitem{[Din]} C. Ding, \textit{The construction and weight distributions of all projective binary linear codes}, 2020, arXiv: 2010.03184.

\bibitem{[Din2]} C. Ding and H. Niederreiter, \textit{Cyclotomic linear codes of order $3$}, IEEE Trans. Inf. Theory \textbf{56} (2007), 2274-2277.

\bibitem{[DW]} C. Ding and X. Wang, \textit{A coding theory construction of new systematic authentication codes}, Theoretical Computer Science 330 (2005), 81-99.

\bibitem{[FH]} Y. Feng and S. Hong, \textit{Improvements of $T$-adic estimates of exponential sums}, Proc. AMS \textbf{150} (2022), 3687-3698.

\bibitem{[Klo]} T. Kl{\o}ve, \textit{Codes for error detection}, Singapore: World Scientific, 2007.


\bibitem{[LN]} R. Lidl and H. Niederreiter, \textit{Finite fields}, second ed., Encyclopedia of Mathematics and its Applications, vol. 20, Cambridge University Press, Cambridge, 1997.

\bibitem{[Moi]} M. Moisio, \textit{A note on evaluations of some exponential sums}, Acta. Arith. \textbf{93} (2000), 117-119.

\bibitem{[Moi2]} M. Moisio, \textit{Explicit evaluation of some exponential sums}, Finite fields Appl. \textbf{15} (2009), 644-651.

\bibitem{[Nat]} M. Nathanson, \textit{Elementary Methods in Number Theory}, Graduate Texts in Mathematics, Springer, New York, 2000.

\bibitem{[TXF]} C. Tang, C. Xiang and K. Feng, \textit{Linear codes with few weights from inhomogeneous quadratic functions}, Des. Codes Cryptogr. \textbf{83} (2017), 691-714.

\bibitem{[Wan1]} D. Wan, \textit{Variation of $p$-adic Newton polygons for $L$-functions of exponential sums}, Asian J. Math. \textbf{8} (2004), 427-472.

\bibitem{[Wan2]} D. Wan, \textit{Exponential sums over finite fields}, J. Systems Science and Complexity \textbf{34} (2021), 1225-1278.

\bibitem{[WDX]} Q. Wang, K. Ding and R. Xue, \textit{Binary linear codes with two weights}, IEEE Communications Letters \textbf{19} (2015), 1097-1100.

\bibitem{[WYL]} Y. Wu, Q. Yue and F. Li, \textit{Three families of monomial functions with three-valued Walsh spectrum}, IEEE Trans. Inf. Theory \textbf{65} (2019), 3304-3314.
\end{thebibliography}
\end{document}